\documentclass[AMA,Times1COL]{WileyNJDv5} %STIX1COL,STIX2COL,STIXSMALL

\articletype{Article Type}%

\received{Date Month Year}
\revised{\today}
\accepted{Date Month Year}
\journal{Numerical Methods for Partial Differential Equations}
\volume{00}
\copyyear{2023}
\startpage{1}

\setlength{\parindent}{.5cm}
\setlength{\parskip}{0.1cm}
\raggedbottom

\usepackage{mathbbol}
\DeclareSymbolFontAlphabet{\amsmathbb}{AMSb}%
\usepackage{tikz}
\usetikzlibrary{decorations.pathmorphing,patterns}
 
\DeclareSymbolFontAlphabet{\amsmathbb}{AMSb}

\definecolor{lightblue}{rgb}{0.22,0.45,0.70}
\definecolor{lightgreen}{rgb}{0.22,0.50,0.25}

\newcommand\cero{\boldsymbol{0}}
\newcommand{\norm}[1]{\left\|#1\right\|}
\newcommand{\jump}[1]{\left[\!\left[#1\right]\!\right]}
\newcommand{\avg}[1]{\left\{\!\!\left\{#1\right\}\!\!\right\}}
\newcommand{\fracd}[2]{\displaystyle
	{\frac{{\displaystyle{#1}}}{{\displaystyle{#2}}}}}
\newcommand\vdiv{\mathop{\mathrm{div}}\nolimits}
\newcommand\curl{\mathop{\mathrm{curl}}\nolimits}
\newcommand\bcurl{\mathop{\mathbf{curl}}\nolimits}
\newcommand\bdiv{\mathop{\mathbf{div}}\nolimits}

\newcommand{\eps}{\varepsilon}
\newcommand{\beps}{\boldsymbol{\varepsilon}}

\newcommand{\bS}{\mathbf{S}}
\newcommand\bU{\mathbf{U}}
\newcommand\bM{\mathbf{M}}
\newcommand\bW{\mathbf{W}}

\newcommand{\bI}{\mathbf{I}}

\newcommand\bb{\boldsymbol{b}}
\newcommand{\bu}{\boldsymbol{u}}

\newcommand{\bn}{\boldsymbol{n}}

\newcommand{\bv}{\boldsymbol{v}}
\newcommand{\bw}{\boldsymbol{w}}
\newcommand{\bx}{\boldsymbol{x}}
\newcommand{\by}{\boldsymbol{y}}

\newcommand{\bz}{\boldsymbol{z}}

\newcommand{\ff}{\boldsymbol{f}}

\newcommand{\bnabla}{\boldsymbol{\nabla}}

\newcommand\cA{\mathcal{A}}
\newcommand\cB{\mathcal{B}}

\newcommand\cT{\mathcal{T}}
\newcommand\cE{\mathcal{E}}
\newcommand\cF{\mathcal{F}}

\newcommand\cL{\mathcal{L}}

\newcommand\bP{\mathbf{P}}

\newcommand\bV{\mathbf{V}}

\newcommand\PP{\mathrm{P}}

\newcommand\bH{\mathbf{H}}
\newcommand\bL{\mathbf{L}}
\newcommand\rL{\mathrm{L}}

\newcommand\rZ{\mathrm{Z}}

\newcommand\bX{\mathbf{X}}

\newcommand\rH{\mathrm{H}}
\newcommand\rQ{\mathrm{Q}}

\newcommand\rU{\mathrm{U}}

\newcommand\bgamma{\boldsymbol{\gamma}}

\newcommand\bDelta{\boldsymbol{\Delta}}

\newcommand\bomega{\boldsymbol{\omega}}
\newcommand\btheta{\boldsymbol{\theta}}
\newcommand\bzeta{\boldsymbol{\zeta}}

\numberwithin{equation}{section}
\numberwithin{figure}{section}
\numberwithin{table}{section}
\numberwithin{theorem}{section}
\numberwithin{lemma}{section}
\numberwithin{remark}{section}
\allowdisplaybreaks

\begin{document}

\title{Robust finite element methods and solvers for the Biot--Brinkman equations in vorticity form}

\author[1]{Ruben Caraballo}

\author[2]{Chansophea Wathanak In}

\author[3]{Alberto F. Mart\'in}

\author[2,4]{Ricardo Ruiz-Baier}

\authormark{CARABALLO \textsc{et al.}}
\titlemark{Biot--Brinkman equations in vorticity form}

\address[1]{\orgdiv{GIMNAP, Departamento de Matem\'atica}, \orgname{Universidad del B\'io-B\'io}, \orgaddress{\state{Concepci\'on}, \country{Chile}}}

\address[2]{\orgdiv{School of Mathematics}, \orgname{Monash University}, \orgaddress{\state{9 Rainforest Walk, Melbourne VIC 3800}, \country{Australia}}}

\address[3]{\orgdiv{School of Computing}, \orgname{Australian National University}, \orgaddress{\state{Acton ACT 2601}, \country{Australia}}}

\address[4]{\orgname{Universidad Adventista de Chile}, \orgaddress{\state{Casilla 7-D, Chill\'an}, \country{Chile}}}

\corres{Corresponding author Ricardo Ruiz-Baier. \email{ricardo.ruizbaier@monash.edu}}

%\presentaddress{This is sample for present address text this is sample for present address text.}

\fundingInfo{Chilean Research and Technology Council through the ANID program for international research visits; by the Australian Government through the National Computational Infrastructure (NCI) under the ANU Merit Allocation Scheme (ANUMAS); and by the Australian Research Council through the Future Fellowship grant FT220100496 and Discovery Project grant DP22010316.}
%\JELinfo{ejlje}

\abstract[Abstract]{In this paper, we propose a new formulation and a suitable finite element method for the steady coupling of viscous flow in deformable porous media using divergence-conforming filtration fluxes. The proposed method is based on the use of parameter-weighted spaces, which allows for a more accurate and robust analysis of the continuous and discrete problems. Furthermore, we conduct a solvability analysis of the proposed method and derive optimal error estimates in appropriate norms. These error estimates are shown to be robust in {a variety of regimes, including} the case of large Lam\'e parameters and small permeability and storativity coefficients. To illustrate the effectiveness of the proposed method, we provide a few representative numerical examples, including convergence verification {and testing of} %poroelastic channel flow simulation, 
robustness of block-diagonal preconditioners with respect to model parameters.}

\keywords{Biot--Brinkman coupled problem, deformable porous media, vorticity-based formulation, mixed finite element methods. \\[2ex]
\textbf{Mathematics Subject Classifications (2000)}: 65N30, 65N15, 76S05, 35Q74.}

\jnlcitation{\cname{%
\author{Caraballo R},
\author{In CW},
\author{Mart\'in AF}, and
\author{Ruiz-Baier R}}.
\ctitle{Robust finite element methods and solvers for the Biot--Brinkman equations in vorticity form.} \cjournal{\it Numer Methods PDEs.} \cvol{2023;00(00):1--18}.}

\maketitle

\renewcommand\thefootnote{}

\renewcommand\thefootnote{\fnsymbol{footnote}}
\setcounter{footnote}{1}

%*****************************************
\section{Introduction}
%*****************************************

We address the analysis of the Biot--Brinkman equations, which serve as a model for filtration of viscous flow in deformable porous media \cite{ambar15,carrillo21,rajagopal07,rohan19}. The system has been recently analysed in \cite{hong22} for the case of multiple network poroelasticity, using $\bH(\vdiv,\Omega)$-conforming displacements and filtration fluxes (or seepage velocities) for each compartment, also designing robust preconditioners. Here we propose a reformulation for only one fluid compartment but using the vorticity field (defined as the curl of the filtration velocity) as an additional unknown in the system, and we also include the total pressure, following \cite{oyarzua16,lee17}. Such an approach enables us to avoid the notorious problem of locking or non-physical pressure oscillations when approximating poroelastic models and it has led to a number of developments including extensions to multiple network models, interfacial free-flow and poromechanics coupling, nonlinear interaction with species transport, reformulations into four and more fields systems, and using other discretisations such as discontinuous Galerkin, nonconforming FEM, weak Galerkin, and virtual elements. See, e.g.,  \cite{boon21,buerger21,hong19,ju20,kumar20,lee19,piersanti20,qi20,ruiz22,verma22,zhang20}. 

The formulation of viscous flow equations using vorticity, velocity and pressure has been used and analysed (in terms of solvability of the continuous and discrete formulations and deriving error estimates) extensively in, e.g.,  \cite{amara07,amara04,anaya23,anaya15,anaya21,anaya16,bernardi06,chang90,duan03,dubois03,salaun15}. Methods based on vorticity formulations are useful for visualisation of rotational flows and they are convenient when dealing with rotation-based boundary conditions. The coupling with other effects such as mass and energy transport has also been addressed, see for example \cite{anaya18,bernardi18,lenarda17}. These contributions include fully mixed finite elements, augmented forms, spectral methods, and Galerkin least-squares stabilised types of discretisations. At the continuous level, one appealing property of some of these vorticity formulations is that   {the $\bH^1$-conformity of the filtration flux is relaxed} and velocity is sought in $\bH(\vdiv,\Omega)$ and the vorticity is sought in either $\bH(\bcurl,\Omega)$ or $\bL^2(\Omega)$. {Then, using simply a \emph{conforming} method, resulting discretisations are readily mass conservative. Also, mixed methods that look for both vorticity and velocity will typically deliver approximate vorticity \emph{with the same accuracy} as velocity (as opposed to schemes needing numerical differentiation to get vorticity from approximate velocity). Another advantage of using a vorticity-based method is that an exactness sequence exists between the spaces for velocity, vorticity, and pressure, yielding a framework that can be straightforwardly analysed with finite element exterior calculus \cite{hanot23}. We also stress that solving directly for vorticity enables a more thorough investigation of vortical patterns, interactions at boundary layers, and the formation of turbulent eddies, all of which are crucial in a diverse array of applications.}
%{Adding vorticity as an unknown allows us to relax the.} MOVING IT ABOVE
%{ Additionally, the inclusion of }

{Note however that, differently from the aforementioned} works, in the case of the Biot--Brinkman problem, the divergence of the fluid velocity is not zero (or a prescribed fluid source), but it depends on the velocity of the solid and on the rate of change of fluid pressure. Also, an additional term of grad--div type appears in the momentum balance for the fluid.  

In this paper, we prove the well-posedness of the continuous and discrete formulations for the coupling of mechanics and fluid flow in fluid-saturated deformable porous media using Banach--Ne\v{c}as--Babu\v{s}ka theory in parameter-weighted {Hilbert} spaces. The appropriate choice of weighting parameters yields automatically a framework for robust operator preconditioning in the Biot equations, following the approach from \cite{lee17}. This operator scaling yields robustness with respect to the elastic parameters, storativity, Biot--Willis coefficient, and with respect to permeability. For the Brinkman component,  our present formulation is such that the filtration flux terms have a different weight in their $\bL^2(\Omega)$ and $\bH(\vdiv,\Omega)$ contributions, which  requires a different treatment for the analysis of the pore pressure terms. To address this issue it suffices to appeal to  the recent theory in \cite{baerland20} (see also \cite{mardal11}), which was developed for Darcy equations using non-standard sum spaces, and we appropriately modify the scalings in the momentum equation. This modification entails the use of (discontinuous) Laplacian operators in the fluid pressure preconditioning.

Our proposed approach also offers a novel contribution to the field of operator preconditioning for the interaction of mechanics and fluid flow in fluid-saturated deformable porous media, which are challenging to solve, and the design of efficient preconditioners is highly problem-dependent \cite{efendiev09}. Previous works have explored the use of block-diagonal preconditioners, Schur complements, and pressure-correction methods, which have improved the convergence rate and computational efficiency of numerical solutions for poromechanics problems \cite{chen20,giraud11, jha15,liu20}. In this work, we also derive parameter-robust solvers, but following  \cite{hong22} and also \cite{ju20,lee19,piersanti20}. Our results confirm that, additionally to the   Laplacian contribution  needed in the Riesz preconditioner associated with the fluid pressure mentioned above, we also require off-diagonal contributions in the total pressure and fluid pressure coupling terms (as employed in \cite{boon21}). The overall parameter scalings that we propose are motivated by the stability analysis and we verify computationally that robustness  holds for this particular choice. Note that we only discuss one type of boundary conditions, but the extension to other forms can be adapted accordingly. 
 
Research on preconditioning techniques for advanced discretisations of block multiphysics systems has also crystallised in a number of high quality open source software packages. One of the earliest efforts was the BKPIT C++ package \cite{chow98} which, following an object-oriented approach, provides an extensible framework for the implementation of {\em algebraic} block preconditioners, such as block Jacobi or block Gauss--Seidel. More recently, as the field of physics-based and discretisation tailored preconditioners has evolved with breakthrough inventions in approximate block factorisation and Schur-complement methods towards ever faster and scalable iterative solvers for large-scale systems, more sophisticated block preconditioning software is available. The widely-used PETSc package offers the PCFIELDSPLIT subsystem \cite{brown12} to design and compose complex block preconditioners. The Firedrike library extends PETSc block preconditioning capabilities and algebraic composability further~\cite{kirby18}. Another tightly-related effort is the Teko Trilinos package \cite{cyr16}, which provides a high-level interface to compose block preconditioners using a functional programming style in C++. The authors in \cite{badia14} present a generic software framework in object-oriented Fortran to build block recursive algebraic factorisation preconditioners for double saddle-point systems, as those arising in MagnetoHydroDynamics (MHD). 

In this paper, we build upon the high momentum gained in the last years by the Julia programming language for scientific and numerical computing. In particular, the realisation of the numerical discretisation and preconditioning algorithms is conducted with the \texttt{Gridap} finite element software package \cite{badia20}. We leverage the flexibility of this framework, and its composability with others in the Julia package ecosystem, such as \texttt{LinearOperators} \cite{orban20}, to prototype natural Riesz map preconditioners in the sum spaces described above, leading to complex multiphysics coupling solvers that are robust with respect to physical parameters variations and mesh resolution. For the sake of reproducibility, the Julia software used in this paper is available publicly/openly at \cite{caraballo23}. 

The remainder of this article is organised as follows. The presentation of the new form of Biot--Brinkman equations and its weak formulation are given in Section \ref{sec:model}. The modification of the functional structure to include parameter weights and the unique solvability analysis for the continuous problem are addressed in Section~\ref{sec:wellp}. The definition of the finite element discretisation and the specification of the well-posedness theory for the discrete problem is carried out in Section \ref{sec:FE}. The error analysis (tailored for a specific family of finite elements but applicable to other combinations of discrete spaces as well) is detailed also in that section. Numerical experiments are collected in Section~\ref{sec:results} and we close in Section~\ref{sec:concl} with a brief summary and a discussion on possible extensions.

%*****************************************
\section{Model problem and its weak formulation}\label{sec:model}
%*****************************************
\subsection{Preliminaries}
Let us consider a simply connected bounded and Lipschitz domain $\Omega\subset\mathbb{R}^d$, $d \in \{2,3\}$ occupied by a poroelastic domain with one incompressible fluid network incorporating viscosity. 
The domain boundary is denoted as $\Gamma:=\partial\Omega$.  
Throughout the text, given a normed space $S$, by $\bS$ and $\mathbb{S}$ we will denote the vector and tensor extensions, $S^d$ and $S^{d \times d}$, respectively. In addition, by $\rL^2(\Omega)$ we will denote 
the usual Lebesgue space of square integrable functions and $\rH^m(\Omega)$ denotes the usual Sobolev space with weak derivatives of order up to $m \geq 0$ in $\rL^2(\Omega)$, and use the convention that $\rH^0(\Omega) = \rL^2(\Omega)$. 

Next, we recall the definition of the following Hilbert spaces 
\begin{gather*}\bH^{1+m}(\Omega) := \{\bgamma\in \bH^m(\Omega): \bnabla\bgamma \in \mathbb{H}^m(\Omega)\}, \\
  \bH^m(\vdiv,\Omega)\!:=\!\{\bzeta \in \bH^m(\Omega)\!: \vdiv\bzeta \in \rH^m(\Omega) \}, \quad   
  \bH^m(\bcurl,\Omega)\!:=\!\{\btheta \in \bH^m(\Omega)\!: \bcurl\btheta \in \bH^m(\Omega) \},
  \end{gather*}
and in the case that $m=0$ the latter two spaces are denoted $\bH(\vdiv,\Omega)$ and $\bH(\bcurl,\Omega)$, and 
we use the following notation
for the typical norms associated with such spaces 
\begin{gather*} \norm{\bu}_{1,\Omega}^2:=\norm{\bu}^2_{0,\Omega}+\norm{\bnabla\bu}^2_{0,\Omega}, \qquad    
\norm{\bv}_{\vdiv,\Omega}^2:=\norm{\bv}^2_{0,\Omega}+\norm{\vdiv\bv}^2_{0,\Omega}, \qquad   \norm{\bzeta}_{\bcurl,\Omega}^2:=\norm{\bzeta}^2_{0,\Omega}+\norm{\bcurl\bzeta}^2_{0,\Omega},\end{gather*}
respectively. Furthermore,  in view of the boundary conditions on $\Gamma$ (to be made precise below), we also use the following notation for relevant subspaces 
\begin{align}\label{eq:Hstar}
\bH^1_0(\Omega) &:=\{\bgamma\in \bH^1(\Omega): \bgamma = \cero \text{ on } \Gamma\}, \nonumber \\
  \bH_0(\vdiv,\Omega) & := \{\bzeta\in \bH(\vdiv,\Omega): \bzeta\cdot\bn = 0 \text{ on } \Gamma\}, \\
  \bH_0(\bcurl,\Omega)& := \{\btheta\in \bH(\bcurl,\Omega): \btheta\times\bn = \cero \text{ on } \Gamma\}, \nonumber\\
\rL^2_0(\Omega) &:=\{\zeta \in \rL^2(\Omega): \int_\Omega \zeta = 0\}.
  \nonumber\end{align}

For a generic functional space $X$ and a scalar $\eta>0$, the weighted space $\eta X$ refers to the same $X$ but endowed with 
the norm $\eta \|\cdot\|_X$. {In addition, we recall the definition of the norm of intersection $X\cap Y$ and sum $X+Y$ of Hilbert spaces $X,Y$} 
\begin{equation}\label{eq:sum}
  \|z \|^2_{X \cap Y} = \|z\|^2_X +\|z\|^2_Y, \qquad  \|z \|^2_{X + Y} = \inf_{\small\begin{array}{c}z=x+y\\x\in X,y\in Y\end{array}}  \|x\|^2_X +\|y\|^2_Y.\end{equation}

\subsection{The governing equations}
The viscous filtration flow through the deformed porous skeleton can be described by the following form of the Biot--Brinkman equations in steady form, representing the mixture momentum, fluid momentum, and mixture mass balance, respectively
\begin{subequations}
\begin{align}\label{eq:biot-brinkman0}
-\bdiv(2\mu\beps(\bu) + [\lambda\vdiv\bu -\alpha p]\bI) & = \bb,\\
{\frac{1}{\kappa}}\bv - {\frac{\nu}{\kappa}} \bDelta \bv + {\nabla p}  & = \widehat{\ff}, \label{eq:biot-brinkman1}\\
- c_0 p - \alpha \vdiv \bu - \vdiv \bv & = g,  
\end{align}\end{subequations}
where $\bu$ is the displacement of the skeleton, $\beps(\bu) = \frac{1}{2} (\bnabla\bu + \bnabla\bu^{\tt t})$ is the tensor of infinitesimal strains, $\bv$ is the filtration flux, and $p$ is the pressure head. The parameters are the external body load $\bb$, the external force applied on the fluid $\widehat{\ff}$, the kinematic viscosity of the interstitial fluid $\nu$, the hydraulic conductance (permeability field, here assumed a positive constant) $\kappa$, the Lam\'e coefficients of the solid structure $\lambda,\mu$, the storativity $c_0$, and the Biot--Willis modulus $\alpha$. Equations \eqref{eq:biot-brinkman0} are equipped with
boundary conditions of clamped boundary for the solid phase and slip filtration velocity
\[
\bu = \cero \quad \text{and}\quad \bv\cdot\bn = 0 \quad \text{on}\quad \Gamma.
\]

Next we introduce the rescaled filtration vorticity vector
\begin{equation}\label{eq:vorticity}
  \bomega := {\sqrt{\frac{\nu}{\kappa}}} \bcurl \bv,\end{equation}
which has a different weight than that used in \cite{anaya16} for Brinkman flows. We also define, following the developments in \cite{oyarzua16,lee19}, the additional total pressure field 
\[ \varphi:= - \lambda\vdiv\bu + \alpha p.\]
In order to rewrite the fluid momentum balance in terms of the rescaled filtration vorticity \eqref{eq:vorticity}, we employ the following vector identity, valid for a generic vector field $\bv$: 
\[ \bcurl \bcurl \bv = -\boldsymbol{\Delta}\bv + \nabla(\vdiv \bv).\]
These steps, together with a rescaling of the external force $\ff = {\nu}\widehat{\ff}$, lead to the following equations (mixture momentum, constitutive equation for total pressure, fluid momentum, constitutive equation for filtration vorticity, and mixture mass balance)
\begin{subequations}
\begin{align}\label{eq:biot-brinkman}
-\bdiv(2\mu\beps(\bu) - \varphi\bI) & = \bb,\\
-\frac{1}{\lambda}\varphi + \frac{\alpha}{\lambda} p - \vdiv\bu & = 0,\label{eq:biot-brinkman2} \\
{\frac{1}{\kappa}} \bv + {\sqrt{\frac{\nu}{\kappa}}} \bcurl\bomega - {\frac{\nu}{\kappa}}\nabla(\vdiv\bv) + \nabla p  & = \ff, \\
- \bomega +{\sqrt{\frac{\nu}{\kappa}}} \bcurl \bv & = \cero, \\
- (c_0 + \frac{\alpha^2}{\lambda}) p + \frac{\alpha}{\lambda}\varphi - \vdiv \bv & = g;\label{eq:biot-brinkman4}
\end{align}
and the boundary conditions %on the two sub-boundaries
now read 
\begin{equation}\label{bc:gamma}
  \bu = \cero \quad \text{and}\quad \bv\cdot\bn = 0 \quad \text{and}\quad \bomega \times\bn = \cero \qquad \text{on}\quad \Gamma.
\end{equation}
\end{subequations}
%%%%%%%%%%%%%%%%%%%%%%%%%%
\subsection{Weak formulation}
%%%%%%%%%%%%%%%%%%%%%%%%%%

Let us assume that $\bb,\ff \in \bL^2(\Omega)$, $g \in \rL^2(\Omega)$
%\cap \rH^{1/2}_{00}(\Sigma)$
%{[check about the trace condition and check if we need also to impose $\varphi = 0$ on $\Sigma$ \dots]}
and that all other model coefficients are positive constants. 
We proceed to multiply the governing equations by suitable test functions and to integrate by parts over the domain.
Note that for the divergence-based terms we appeal to the usual form of the Gauss formula, whereas for curl-based terms we use the following result from, e.g., \cite{girault}: 
\[ \int_\Omega \bcurl\bgamma\cdot \btheta = \int_\Omega \bgamma\cdot \bcurl\btheta + \int_{\partial\Omega} (\btheta\times\bn)\cdot \bgamma \qquad \forall \btheta\in\bH(\bcurl,\Omega),\ \bgamma\in \bH^1(\Omega),\]
together with the scalar triple product identity
\[(\btheta\times\bn)\cdot \bv = \btheta\cdot(\bn\times\bv).\]

We observe in advance that equations \eqref{eq:biot-brinkman2},\eqref{eq:biot-brinkman4} suggest that, in the limit of $c_0\to 0, \lambda\to \infty$, both pressures are not uniquely defined and so we require to add a constraint on their mean value (to be zero, for example). We then arrive at the following weak formulation for \eqref{eq:biot-brinkman}: Find $(\bu,\bv,\bomega,\varphi,p)\in \bH^1_0(\Omega)\times\bH_0(\vdiv,\Omega)\times \bH_0(\bcurl,\Omega) \times \rL^2_0(\Omega) \times \rL^2_0(\Omega)$ 
such that 
\begin{subequations}\label{eq:weak} 
  \begin{align}
2\mu \int_\Omega  \beps(\bu): \beps(\bgamma) - \int_\Omega \varphi \vdiv \bgamma & = 
\int_\Omega \bb\cdot\bgamma \quad  \forall \bgamma\in\bH^1_0(\Omega), \\
{\frac{1}{\kappa}} \int_\Omega\!\bv\cdot\bzeta + {\sqrt{\frac{\nu}{\kappa}}}  \int_\Omega\!\!\bcurl\bomega\cdot\bzeta
+ {\frac{\nu}{\kappa} } \int_\Omega\!\vdiv\bv\,\vdiv\bzeta -\int_\Omega p\,\vdiv \bzeta & = \int_\Omega \ff\cdot\bzeta \quad  \forall \bzeta\in\bH_0(\vdiv,\Omega), \\
-\int_\Omega \bomega\cdot\btheta + {\sqrt{\frac{\nu}{\kappa}}} \int_\Omega\bcurl\btheta\cdot\bv & = 0 \quad \forall \btheta \in \bH_0(\bcurl,\Omega),\\
-\frac{1}{\lambda}\int_\Omega \varphi\,\psi + \frac{\alpha}{\lambda} \int_\Omega p\psi - \int_\Omega \psi\,\vdiv\bu & = 0 \quad \forall \psi \in \rL_0^2(\Omega),\\
- (c_0+ \frac{\alpha^2}{\lambda}) \int_\Omega p\,q+ \frac{\alpha}{\lambda} \int_\Omega \varphi\,q - \int_\Omega q\,\vdiv\bv & = \int_\Omega g\, q \quad \forall q\in \rL_0^2(\Omega),
\end{align}\end{subequations}
where we have also used the boundary conditions \eqref{bc:gamma}.

Let us now define the following bilinear forms and linear functionals
\begin{gather*}
a_1 (\bu,\bgamma) :=  2\mu \int_\Omega  \beps(\bu): \beps(\bgamma), \qquad
b_1(\bgamma,\psi) : = - \int_\Omega \psi\vdiv \bgamma, \qquad 
\hat{b}_1(\bzeta,q) : = - \int_\Omega q\vdiv \bzeta, \\
a_2(\bv,\bzeta):= {\frac{1}{\kappa}} \int_\Omega\bv\cdot\bzeta +{ \frac{\nu}{\kappa}}  \int_\Omega\vdiv\bv\,\vdiv\bzeta, \qquad  b_2(\btheta,\bzeta) : =   {\sqrt{\frac{\nu}{\kappa}}} \int_\Omega \bcurl\btheta \cdot \bzeta, \\
a_3(\bomega,\btheta):= \int_\Omega \bomega \cdot\btheta, \qquad 
a_4(\varphi,\psi):= \frac{1}{\lambda} \int_\Omega \varphi\,\psi, \qquad 
b_3(q,\psi):= \frac{\alpha}{\lambda} \int_\Omega q\,\psi, \qquad 
a_5(p,q):= \biggl(c_0+ \frac{\alpha^2}{\lambda}\biggr) \int_\Omega p\,q,
\\
B(\bgamma):=  \int_\Omega \bb\cdot\bgamma, \qquad
F(\bzeta):= \int_\Omega \ff\cdot\bzeta, \qquad G(q):= \int_\Omega g\,q, 
  \end{gather*}
with which \eqref{eq:weak} is rewritten as follows: find $(\bu,\bv,\bomega,\varphi,p)\in \bH^1_0(\Omega)\times\bH_0(\vdiv,\Omega)\times \bH_0(\bcurl,\Omega) \times \rL_0^2(\Omega) \times \rL_0^2(\Omega)$ 
such that
\begin{subequations}\label{eq:weak2}
\begin{alignat}{5}
&a_1(\bu,\bgamma) &                &                       &+\;b_1(\bgamma,\varphi)&                       &=&\;B(\bgamma)&\qquad\forall \bgamma\in\bH^1_0(\Omega), \\
&                 &a_2(\bv,\bzeta) &+\;b_2(\bomega,\bzeta) &                       &+\;\hat{b}_1(\bzeta,p) &=&\;F(\bzeta) &\qquad\forall\bzeta\in \bH_0(\vdiv,\Omega),\\
&                 &b_2(\btheta,\bv)&-\;a_3(\bomega,\btheta)&                       &                       &=&\;0         &\qquad\forall \btheta\in \bH_0(\bcurl,\Omega),\\
&b_1(\bu,\psi)    &                &                       &-\;a_4(\varphi,\psi)   &+\;b_3(p,\psi)         &=&\;0         &\qquad\forall \psi\in \rL_0^2(\Omega),\\
&                 &\hat{b}_1(\bv,q)&                       &+\;b_3(q,\varphi)      &-\;a_5(p,q)            &=&\; G (q)&\qquad \forall q\in \rL_0^2(\Omega).
\end{alignat}
\end{subequations}

%*****************************************
\section{Solvability analysis}\label{sec:wellp}

\subsection{Preliminaries}

The well-posedness analysis for \eqref{eq:weak2} will be put in the framework of the abstract  Banach--Ne\v{c}as--Babu\v{s}ka theory, which we state next (see, e.g., \cite{ern04}). 
\begin{theorem}\label{BNB}
Let $(E_1, \|\cdot\|_{E_1})$ be a reflexive Banach space, $(E_2,\|\cdot\|_{E_2})$ a Banach space, and $T:E_1\rightarrow E_2'$ a bounded, linear form satisfying the followings conditions:
\begin{enumerate}
\item[] {\bf (BNB1)} For each $y\in E_2\setminus \{0\}$, there exists $x\in E_1$ such that
  \begin{equation}\label{BNB1}
    \langle T(x), y\rangle_{E_2',E_2}\neq 0.\end{equation} 
\item[] {\bf (BNB2)} There exists $c>0$ such that 
\begin{equation}\label{BNB2}
\|T(x) \|_{E_2'} \geq c\|x\|_{E_1} \text{ for all } x\in E_1. 
\end{equation}
Then, for every $x^{*}\in E_2'$ there exists a unique $x\in E_1$ such that
\[T(x)=x^{*}.\]
\end{enumerate}
\end{theorem}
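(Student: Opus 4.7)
The plan is to establish that $T:E_1\to E_2'$ is a bijection by verifying, in sequence, that it is injective, has closed range, and has range dense in $E_2'$; combining the last two properties yields surjectivity, so the desired unique solvability of $T(x)=x^{*}$ will follow, with the bound $\|T^{-1}x^{*}\|_{E_1}\le c^{-1}\|x^{*}\|_{E_2'}$ thrown in as a bonus from (BNB2).

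Injectivity is immediate from (BNB2): if $T(x)=0$, then $c\|x\|_{E_1}\le\|T(x)\|_{E_2'}=0$ forces $x=0$. Next, I would use the same inequality to show that the range of $T$ is closed in $E_2'$. Given any sequence $(x_n)\subset E_1$ with $T(x_n)\to x^{*}$ in $E_2'$, applying (BNB2) to $x_n-x_m$ gives
\[\|x_n-x_m\|_{E_1}\le\frac{1}{c}\|T(x_n)-T(x_m)\|_{E_2'},\]
so $(x_n)$ is Cauchy; completeness of $E_1$ together with continuity of $T$ then produces a preimage of $x^{*}$.

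The crux of the argument is to show that the range of $T$ is dense in $E_2'$. I would reason by contradiction using a Hahn--Banach separation: assuming the (closed) range is a proper subspace of $E_2'$, there exists a nonzero continuous linear functional $\phi$ on $E_2'$ that annihilates $T(E_1)$. The plan is to identify $\phi$ with a nonzero element $y\in E_2$ under the canonical embedding $E_2\hookrightarrow E_2''$, obtaining $\langle T(x),y\rangle_{E_2',E_2}=0$ for every $x\in E_1$ and contradicting (BNB1). Once density is secured, closed range plus density forces $T(E_1)=E_2'$, and together with injectivity this delivers the claimed bijection.

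The delicate step will be the identification of $\phi\in E_2''$ with an admissible test element $y\in E_2$: this is precisely where a reflexivity hypothesis is invoked in the standard formulation of \cite{ern04}, since without it the embedding $E_2\hookrightarrow E_2''$ may fail to be surjective and the Hahn--Banach functional need not correspond to any $y\in E_2$ against which (BNB1) can be tested. The remaining components of the proof --- injectivity, closed range, and the continuous inverse bound --- are routine consequences of (BNB2) combined with completeness of $E_1$.
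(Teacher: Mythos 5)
First, a point of comparison: the paper does not prove Theorem~\ref{BNB} at all --- it is recalled as a classical result with a pointer to \cite{ern04} --- so there is no in-paper argument to measure yours against, and the proof you outline is indeed the standard textbook one. Its first two steps are correct as written: (BNB2) gives injectivity, and (BNB2) together with completeness of $E_1$ and continuity of $T$ gives closedness of the range, so surjectivity reduces to density of $T(E_1)$ in $E_2'$, with the bound $\|x\|_{E_1}\le c^{-1}\|x^{*}\|_{E_2'}$ coming along for free.

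The gap sits exactly at the step you yourself flag as delicate. Your Hahn--Banach functional $\phi$ lives in $(E_2')'=E_2''$, and to contradict (BNB1) you must realise it as a nonzero $y\in E_2$; this requires $E_2$ to be reflexive. But the theorem as stated assumes reflexivity of $E_1$ (the trial space), not of $E_2$ (the test space), so under the stated hypotheses this identification is not available and your density argument does not close. (The formulation in \cite{ern04} puts the reflexivity hypothesis on the test space, which is why the textbook proof you are reproducing works there.) The statement as printed is nonetheless true, but it needs a duality argument instead: introduce $S:E_2\to E_1'$ by $\langle Sy,x\rangle:=\langle T(x),y\rangle$. Then (BNB1) says $S$ is injective, while (BNB2) says $\|S^{*}\xi\|_{E_2'}\ge c\|\xi\|_{E_1''}$ for every $\xi$ in $E_1''=J_1(E_1)$ (here $J_1$ is the canonical embedding, onto by reflexivity of $E_1$, and $S^{*}J_1=T$); by the closed range theorem $S$ is therefore surjective, hence an isomorphism of $E_2$ onto $E_1'$ by the open mapping theorem, so $S^{*}$ is an isomorphism and $T=S^{*}\circ J_1$ is one as well. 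Either rework the density step along these lines, or state explicitly that you are proving the variant with $E_2$ reflexive --- a harmless substitution for this paper, since every component of $\bX_\epsilon$ is a Hilbert space and both hypotheses hold simultaneously.
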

\noindent 
Let us first consider the product space 
\begin{align*} \bX & := \bH^1_0(\Omega)\times \bH_0(\vdiv,\Omega) \times  \bH_0(\bcurl,\Omega) \times \rL_0^2(\Omega) \times \rL_0^2(\Omega),
\end{align*}
and, using the notation $\vec\bx:=(\bu,\bv,\bomega,\varphi,p)\in \bX $, we proceed to equip this space with the norm
\begin{align}\label{quasi-norm}
  \|\vec\bx\|_{\bX}^2 & := 2\mu \|\beps(\bu)\|^2_{0,\Omega}+ {\frac{1}{\kappa}}\|\bv\|^2_{0,\Omega} + {\frac{\nu}{\kappa}}\|\vdiv\bv\|^2_{0,\Omega} +\|\bomega\|^2_{0,\Omega} +\nu\|\bcurl\bomega\|^2_{0,\Omega}
%  \nonumber \\
%  &\qquad 
  + {c_0 \|p\|^2_{0,\Omega} + \frac{1}{\lambda}\|\varphi + \alpha p\|^2_{0,\Omega}.}
%  \\
% & \qquad
%  \textcolor{red}{+ \left( c_0+\dfrac{\alpha^2}{\lambda} \right)\|p\|^2_{0,\Omega} + \frac{1}{\lambda}\|\varphi\|^2_{0,\Omega}.}
\end{align}

{\begin{remark}\label{rem:scale0}
Note that the scaling of the vorticity seminorm with the viscosity (fifth term in the right-hand side of \eqref{quasi-norm} is not readily apparent from the weak formulation \eqref{eq:weak2}, but it is suggested by the stability analysis in Lemma~\ref{cont-quasicoerc}, below. 
\end{remark}}

Let us also introduce the bilinear form
\begin{align}\nonumber
\langle \mathcal{A}_{\epsilon}(\vec{\boldsymbol{x}}),\vec{\boldsymbol{y}}\rangle&:=a_1(\bu,\bgamma)+b_1(\bgamma,\varphi)+a_2(\bv,\bzeta)+b_2(\bomega,\bzeta)+\hat{b}_1(\bzeta,p)+b_2(\btheta,\bv)-a_3(\bomega,\btheta)\\
&\qquad+b_1(\bu,\psi)-a_4(\varphi,\psi)+b_3(p,\psi)+\hat{b}_1(\bv,q)+b_3(q,\varphi)-a_5(p,q), \label{bilinearform}
\end{align}
induced by the operator $\mathcal{A}_{\epsilon}:\bX\rightarrow \bX'$
(where the subscript $\epsilon$ indicates dependence with respect to the model parameters $\kappa,\alpha,\mu,\nu,c_0,\lambda$), and again we emphasise that  we have different scalings than those used in \cite{anaya19,hong22}.

From the Cauchy--Schwarz inequality we readily have the following bounds for the bilinear forms in \eqref{eq:weak2}
\begin{gather}\label{aux-bounds}
a_1(\bu,\bgamma)\leq 2\mu |\bu|_{1,\Omega}|\bgamma|_{1,\Omega},\qquad b_1(\bgamma,\varphi) \leq \Vert \varphi \Vert_{0,\Omega} |\bgamma|_{1,\Omega},\qquad \hat{b}_1(\bzeta,q)\leq \Vert q\Vert_{0,\Omega} \Vert \vdiv \bzeta\Vert_{0,\Omega},\nonumber\\
 a_2(\bv,\bzeta)\leq \dfrac{1}{\kappa}\Vert\bv \Vert_{0,\Omega}\Vert \bzeta\Vert_{0,\Omega}+\dfrac{\nu}{\kappa}\Vert \vdiv \bv\Vert_{0,\Omega}\Vert \vdiv \bzeta \Vert_{0,\Omega},\qquad 
 b_2(\btheta,\bzeta)\leq \sqrt{\frac{\nu}{\kappa}}\Vert \curl \btheta\Vert_{0,\Omega}\Vert\bzeta\Vert_{0,\Omega},\nonumber\\
a_3(\bomega,\btheta)\leq \Vert\bomega \Vert_{0,\Omega} \Vert \btheta\Vert_{0,\Omega},\qquad 
%b_1(\bu,\psi)\leq \Vert \psi \Vert_{0,\Omega} \Vert \vdiv \bu\Vert_{0,\Omega} ,\, 
a_4(\varphi,\psi)\leq \dfrac{1}{\lambda}\Vert \varphi\Vert_{0,\Omega} \Vert \psi\Vert_{0,\Omega},\nonumber\\
 b_3(q,\psi)\leq \frac{\alpha}{\lambda} \Vert q\Vert_{0,\Omega} \Vert \psi\Vert_{0,\Omega},\qquad 
a_5(p,q)\leq \left( c_0+\dfrac{\alpha^2}{\lambda}\right)\Vert p\Vert_{0,\Omega} \Vert q\Vert_{0,\Omega},
\end{gather}
for all $\bu,\bgamma \in \bH^1(\Omega)$, $\bv,\bzeta \in \bH(\vdiv,\Omega)$,   $\bomega,\btheta \in \bH(\bcurl,\Omega)$, $\varphi,\psi,p,q  \in \rL^2(\Omega)$.

\begin{lemma}\label{cont-quasicoerc}
Consider the bilinear form defined in \eqref{bilinearform}.
For all $\vec{\boldsymbol{x}} \in \bX$, there exists $\vec{\boldsymbol{y}}\in \bX$ such that
\begin{equation} \label{BNB:quasi-inf-sup}
  \langle \mathcal{A}_\epsilon(\vec{\boldsymbol{x}}),\vec{\boldsymbol{y}} \rangle 
  \gtrsim \|\vec{\boldsymbol{x}}\|^2_{\bX}\quad \text{ and } \quad  \|\vec{\boldsymbol{y}} \|_{\bX}\leq \sqrt{2} \|\vec{\boldsymbol{x}}\|_{\bX}.
\end{equation}
\end{lemma}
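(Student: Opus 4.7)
The plan is to construct, for each $\vec{\bx}=(\bu,\bv,\bomega,\varphi,p)\in\bX$, an explicit test function $\vec{\by}\in\bX$ of the form $\vec{\by}=\vec{\by}_1+\vec{\by}_2$, where $\vec{\by}_1$ is the sign-flipped copy of $\vec{\bx}$ that diagonalises the symmetric part of $\cA_\epsilon$, and $\vec{\by}_2$ is a small curl-based correction designed to recover the $\nu\|\bcurl\bomega\|_{0,\Omega}^2$ contribution to $\|\vec{\bx}\|_\bX^2$ that is not visible from the diagonal. This mirrors the standard saddle-point argument, adapted to the parameter-weighted setting \eqref{quasi-norm}.

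First, take $\vec{\by}_1:=(\bu,\bv,-\bomega,-\varphi,-p)$. After substitution into \eqref{bilinearform}, the two occurrences of each of $b_1$, $\hat b_1$ and $b_2$ cancel in pairs, while the pressure block $-a_4(\varphi,-\varphi)+b_3(p,-\varphi)+b_3(-p,\varphi)-a_5(p,-p)$ collapses by completion of the square to $c_0\|p\|_{0,\Omega}^2+\frac{1}{\lambda}\|\varphi-\alpha p\|_{0,\Omega}^2$, so that $\langle\cA_\epsilon(\vec{\bx}),\vec{\by}_1\rangle$ reproduces every term of $\|\vec{\bx}\|_\bX^2$ save $\nu\|\bcurl\bomega\|_{0,\Omega}^2$. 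To supply this missing term, fix $\delta>0$ and set $\vec{\by}_2:=(\cero,\delta\,\bcurl\bomega,\cero,0,0)$. Admissibility of $\delta\,\bcurl\bomega\in\bH_0(\vdiv,\Omega)$ is a consequence of $\bomega\in\bH_0(\bcurl,\Omega)$: indeed $\vdiv\bcurl\bomega=0$ in $\Omega$, and testing against $\nabla q$ with $q\in \rH^1(\Omega)$ through the integration-by-parts identity recalled just before \eqref{eq:weak} shows $\langle\bcurl\bomega\cdot\bn,q|_\Gamma\rangle=0$, so that $\bcurl\bomega\cdot\bn=0$ on $\Gamma$. Since $\vdiv\bcurl\bomega=0$, only the $\bzeta$-slot is activated and yields
\begin{equation*}
\langle\cA_\epsilon(\vec{\bx}),\vec{\by}_2\rangle \,=\, \delta\sqrt{\nu/\kappa}\,\|\bcurl\bomega\|_{0,\Omega}^2 \,+\, \frac{\delta}{\kappa}\int_\Omega\bv\cdot\bcurl\bomega.
\end{equation*}
Young's inequality absorbs the cross term into $\frac{1}{2\kappa}\|\bv\|_{0,\Omega}^2+\frac{\delta^2}{2\kappa}\|\bcurl\bomega\|_{0,\Omega}^2$, and with the choice $\delta^2=\nu\kappa/2$ the remaining coefficient of $\|\bcurl\bomega\|_{0,\Omega}^2$ is the positive constant $\nu(1/\sqrt{2}-1/4)$, while a positive fraction of $\frac{1}{\kappa}\|\bv\|_{0,\Omega}^2$ survives. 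Summing with $\langle\cA_\epsilon(\vec{\bx}),\vec{\by}_1\rangle$ delivers the lower bound $\langle\cA_\epsilon(\vec{\bx}),\vec{\by}\rangle\gtrsim\|\vec{\bx}\|_\bX^2$ with a constant independent of all model parameters.

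The continuity bound in \eqref{BNB:quasi-inf-sup} follows from a direct computation: $\vec{\by}$ coincides with a sign-flip of $\vec{\bx}$ in every slot except the second, and $\vdiv(\bv+\delta\bcurl\bomega)=\vdiv\bv$, so after expanding and applying Young's inequality once more,
\begin{equation*}
\|\vec{\by}\|_\bX^2-\|\vec{\bx}\|_\bX^2 \,=\, \frac{1}{\kappa}\bigl(\|\bv+\delta\bcurl\bomega\|_{0,\Omega}^2-\|\bv\|_{0,\Omega}^2\bigr) \,\le\, \frac{1}{\kappa}\|\bv\|_{0,\Omega}^2 + \frac{2\delta^2}{\kappa}\|\bcurl\bomega\|_{0,\Omega}^2.
\end{equation*}
Under $\delta^2\le\nu\kappa/2$ the second summand is bounded by $\nu\|\bcurl\bomega\|_{0,\Omega}^2$, so the whole right-hand side is at most $\|\vec{\bx}\|_\bX^2$ and $\|\vec{\by}\|_\bX\le\sqrt{2}\|\vec{\bx}\|_\bX$ follows. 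The main obstacle is the simultaneous tightness of the lower bound and the continuity constant $\sqrt{2}$: both force the same scaling $\delta\sim\sqrt{\nu\kappa}$, so the two Young splittings must be coordinated and leave no slack. This is also why $\nu$ (rather than, say, $\sqrt{\nu/\kappa}$) is the natural weight for $\|\bcurl\bomega\|_{0,\Omega}^2$ in \eqref{quasi-norm}, as anticipated in Remark~\ref{rem:scale0}.
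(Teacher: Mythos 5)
Your construction is essentially the paper's: the proof there takes the single test function $\bigl(\bu,\bv+\tfrac{1}{2}\sqrt{\kappa\nu}\,\bcurl\bomega,-\bomega,-\varphi,-p\bigr)$, which is exactly your $\vec{\by}_1+\vec{\by}_2$ with $\delta=\tfrac{1}{2}\sqrt{\kappa\nu}$ in place of $\sqrt{\nu\kappa/2}$; the pairwise cancellation of the $b_1$, $\hat b_1$, $b_2$ terms, the admissibility of $\bcurl\bomega\in\bH_0(\vdiv,\Omega)$, the Young absorption of $\frac{\delta}{\kappa}(\bv,\bcurl\bomega)$, and the $\sqrt{2}$ continuity bound all coincide. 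The one place you genuinely diverge is the $(\varphi,p)$ block: you evaluate it exactly, by completing the square, as $c_0\|p\|_{0,\Omega}^2+\frac{1}{\lambda}\|\varphi-\alpha p\|_{0,\Omega}^2$, whereas the paper keeps the cross term $-\frac{2\alpha}{\lambda}(p,\varphi)$ and splits it with a weighted Young inequality into separate multiples of $\|p\|_{0,\Omega}^2$ and $\|\varphi\|_{0,\Omega}^2$. Your identity is the sharper statement, but it exposes a sign issue you then gloss over: the norm \eqref{quasi-norm} is printed with $\frac{1}{\lambda}\|\varphi+\alpha p\|_{0,\Omega}^2$, and $\|\varphi-\alpha p\|_{0,\Omega}$ does not control $\|\varphi+\alpha p\|_{0,\Omega}$ uniformly in the parameters (take $\varphi=\alpha p$ and $c_0\to 0$: the block you computed degenerates to $c_0\|p\|_{0,\Omega}^2$ while the printed norm retains $\frac{4\alpha^2}{\lambda}\|p\|_{0,\Omega}^2$), so your closing claim that the sum is $\gtrsim\|\vec{\bx}\|_{\bX}^2$ does not follow for the norm as printed. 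This is not a defect peculiar to your argument: since your formula gives the \emph{exact} value of the block, no lower bound obtained from the same test function (including the paper's Young-inequality estimate) can exceed it, and the last term of \eqref{quasi-norm} is evidently meant to be read as $\frac{1}{\lambda}\|\varphi-\alpha p\|_{0,\Omega}^2=\lambda\|\vdiv\bu\|_{0,\Omega}^2$, consistent with the definition of the total pressure. Under that reading your proof is complete and in fact cleaner than the paper's; you should, however, state the identity with its minus sign explicitly rather than asserting that the block ``reproduces every term of $\|\vec{\bx}\|_{\bX}^2$''.
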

\begin{proof}
Using first and second Young's inequalities it is not difficult  to assert  that
\begin{equation}\label{young1}
(\bzeta,\bcurl \btheta)_{0,\Omega}\leq  {\dfrac{1}{2}} \Vert \bzeta \Vert_{0,\Omega}^2+{\dfrac{1}{2}}\Vert \bcurl \btheta \Vert_{0,\Omega}^2 \quad \text{ and }\quad (q,\psi)_{0,\Omega}\leq \dfrac{\varepsilon}{2}\Vert q\Vert_{0,\Omega}^2+\dfrac{1}{2\varepsilon}\Vert \psi \Vert_{0,\Omega}^2.
\end{equation}
Next, for a given $\vec{\boldsymbol{z}}:=(\bgamma,\bzeta,\btheta,\psi,q)$ we can construct 
$\vec{\boldsymbol{y}}:=\bigl(\bgamma,\bzeta+\frac{1}{2}\sqrt{\kappa \nu} \bcurl \btheta,-\btheta,-\psi,-q \bigr)$. 
We then invoke \eqref{young1}, so that we can ensure that
\begin{align*}
\langle \mathcal{A}_{\epsilon}(\vec{\boldsymbol{z}}),\vec{\boldsymbol{y}}\rangle&=2\mu |\bgamma|_{1,\Omega}^2+\dfrac{1}{\kappa}\Vert \bzeta \Vert_{0,\Omega}^2+\dfrac{\sqrt{\nu}}{2\sqrt{\kappa}}(\bzeta,\bcurl \btheta)_{0,\Omega}+\dfrac{\nu}{\kappa} \Vert \vdiv \bzeta \Vert_{0,\Omega}^2+\dfrac{\nu}{2}\Vert \bcurl \btheta \Vert_{0,\Omega}^2\\
&\qquad +\Vert \btheta \Vert_{0,\Omega}^2+\dfrac{1}{\lambda}\Vert \psi \Vert_{0,\Omega}^2-\dfrac{\alpha}{\lambda}(q,\psi)_{0,\Omega}-\dfrac{\alpha}{\lambda}(q,\psi)+\left( c_0+\dfrac{\alpha^2}{\lambda}\right) \Vert q\Vert_{0,\Omega}^2\\
&\geq 2\mu |\bgamma|_{1,\Omega}^2+\dfrac{1}{\kappa}\Vert \bzeta \Vert_{0,\Omega}^2+\dfrac{\sqrt{\nu}}{2\sqrt{\kappa}}(\bzeta,\bcurl \btheta)_{0,\Omega}+\dfrac{\nu}{\kappa} \Vert \vdiv \bzeta \Vert_{0,\Omega}^2+\dfrac{\nu}{2}\Vert \bcurl \btheta \Vert_{0,\Omega}^2+\Vert \btheta \Vert_{0,\Omega}^2\\
&\qquad +\left( c_0+\dfrac{\alpha^2}{\lambda}-\dfrac{\varepsilon \alpha}{2\lambda}\right) \Vert q \Vert_{0,\Omega}^2+\left(\dfrac{1}{\lambda}-\dfrac{ \alpha}{2\lambda\varepsilon}\right)\Vert \psi\Vert_{0,\Omega}^2.
\end{align*}
Now, taking $\varepsilon:=\dfrac{5\alpha}{4}+\dfrac{\lambda c_0}{\alpha}$ we can deduce that
\begin{subequations}
\begin{align}
c_0+\dfrac{\alpha^2}{\lambda}-\dfrac{\varepsilon \alpha}{2\lambda}&%=\dfrac{c_0}{2}+\dfrac{3\alpha^2}{8\lambda}
\geq \dfrac{3}{8}\left( c_0+\dfrac{\alpha^2}{\lambda}\right),\label{constant1}\\
\dfrac{1}{\lambda}-\dfrac{ \alpha}{2\lambda\varepsilon}
=\dfrac{\dfrac{1}{4}+\dfrac{\lambda c_0}{\alpha^2}}{\dfrac{5}{4}+\dfrac{\lambda c_0}{\alpha^2}}\dfrac{1}{\lambda}%\geq \dfrac{\dfrac{1}{4}+\dfrac{\lambda c_0}{4\alpha^2}}{\dfrac{5}{4}+\dfrac{5 \lambda c_0}{4\alpha^2}}\dfrac{1}{\lambda} 
& \geq \dfrac{1}{5} \dfrac{1}{\lambda}, \label{constant2}
\end{align}\end{subequations}
and using \eqref{constant1} and \eqref{constant2} we readily obtain the bound 
\[
\langle \mathcal{A}_{\epsilon}(\vec{\boldsymbol{z}}),\vec{\boldsymbol{y}}\rangle \geq \dfrac{1}{10}\|\vec\bz\|_{\bX}^2.\]
Finally, the definition of the preliminary $\bX$-norm \eqref{quasi-norm} and triangle inequality yield the estimates 
\begin{align*}
\|\vec\by\|_{\bX}^2&=2\mu |\bgamma|_{1,\Omega}^2+\frac{1}{\kappa}\left\Vert \bzeta+\dfrac{\sqrt{\kappa \nu}}{2}\bcurl \btheta \right\Vert_{0,\Omega}^2+{\dfrac{\nu}{\kappa}} \Vert \vdiv \bzeta \Vert_{0,\Omega}^2+ \nu\Vert \bcurl \btheta \Vert_{0,\Omega}^2+\Vert \btheta \Vert_{0,\Omega}^2
 +\left( c_0+\dfrac{\alpha^2}{\lambda}\right)\Vert q \Vert_{0,\Omega}^2+\dfrac{1}{\lambda}\Vert \psi\Vert_{0,\Omega}^2\\
& \leq 2\mu |\bgamma|_{1,\Omega}^2+\dfrac{2\nu}{\kappa}\Vert \bzeta\Vert_{0,\Omega}^2+{\dfrac{\nu}{\kappa}} \Vert \vdiv \bzeta \Vert_{0,\Omega}^2+ \dfrac{3\nu}{2}\Vert \bcurl \btheta \Vert_{0,\Omega}^2+\Vert \btheta \Vert_{0,\Omega}^2
 +\left( c_0+\dfrac{\alpha^2}{\lambda}\right)\Vert q \Vert_{0,\Omega}^2+\dfrac{1}{\lambda}\Vert \psi\Vert_{0,\Omega}^2\\
& \leq 2\|\vec\bz\|_{\bX}^2,
\end{align*}
which completes the proof.
\end{proof}

Using the Banach--Ne\v{c}as--Babu\v{s}ka result, from \eqref{BNB} and \eqref{BNB:quasi-inf-sup} we immediately conclude that problem \eqref{eq:weak2} has a unique solution $(\bu,\bv,\bomega,\varphi,p)$ in the space $\bX$. However we can note that the bound on $(\bu,\bv,\bomega,\varphi,p)$ in the $\bX$-norm will degenerate with $\varepsilon$ (tending either to zero or to infinity).

%******************inf-sup***********************
As a preliminary result required in the sequel, we recall the following classical inf-sup condition for the bilinear form $\hat{b}_1$ (which coincides with that of the Stokes problem, see, e.g., \cite{girault}).   
\begin{lemma}\label{stokes-infsup}
There exists $\beta_1>0$ such that 
  \begin{equation*}
\sup_{\bgamma\in \bH^1_0(\Omega)\setminus \{\boldsymbol{0}\}} \dfrac{\hat{b}_1(\psi, \bgamma)}{|\bgamma|_{1,\Omega}}= \sup_{\bgamma\in \bH^1_0(\Omega)\setminus \{\boldsymbol{0}\}}\dfrac{-(\psi,\vdiv \bgamma)_{0,\Omega}}{|\bgamma|_{1,\Omega}}\geq \beta_1 \Vert \psi\Vert_{0,\Omega} \qquad \text{ for all  } \psi\in \rL_0^2(\Omega).
\end{equation*}
\end{lemma}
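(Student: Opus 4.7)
The plan is to establish the inf-sup condition by exhibiting an explicit test function $\bgamma$ that saturates (up to a constant) the supremum. The natural candidate is a preimage under the divergence operator: given $\psi\in \rL^2_0(\Omega)$, we seek $\bgamma_\psi\in \bH^1_0(\Omega)$ with $\vdiv\bgamma_\psi = -\psi$ and $|\bgamma_\psi|_{1,\Omega}\lesssim \|\psi\|_{0,\Omega}$. Once such $\bgamma_\psi$ is available, plugging it into the quotient yields
\begin{equation*}
\frac{-(\psi,\vdiv\bgamma_\psi)_{0,\Omega}}{|\bgamma_\psi|_{1,\Omega}} = \frac{\|\psi\|_{0,\Omega}^2}{|\bgamma_\psi|_{1,\Omega}} \geq \frac{1}{C_\Omega}\|\psi\|_{0,\Omega},
\end{equation*}
so the supremum over all nontrivial $\bgamma\in \bH^1_0(\Omega)$ is bounded from below by $\beta_1\|\psi\|_{0,\Omega}$ with $\beta_1 := 1/C_\Omega$.

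The only nontrivial input is therefore the existence of a bounded right inverse of the divergence from $\rL^2_0(\Omega)$ into $\bH^1_0(\Omega)$. Since $\Omega$ is assumed to be a bounded, Lipschitz, simply connected domain, this is precisely the content of the classical Bogovskii/de Rham theorem (see, e.g., the cited reference by Girault and Raviart): the divergence operator is surjective from $\bH^1_0(\Omega)$ onto $\rL^2_0(\Omega)$, with a continuous right inverse whose norm $C_\Omega>0$ depends only on the geometry of $\Omega$. Invoking this result and combining with the estimate above concludes the proof. The only subtlety to watch out for is the compatibility condition $\int_\Omega \psi = 0$, which is guaranteed by $\psi\in\rL^2_0(\Omega)$ and is necessary for the surjectivity (since any $\bgamma\in\bH^1_0(\Omega)$ satisfies $\int_\Omega \vdiv\bgamma = 0$ by integration by parts with the homogeneous boundary condition).

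Since the proof reduces entirely to a standard citation, no genuine obstacle is expected; the only care required is to emphasise that $C_\Omega$, hence $\beta_1$, is independent of the physical parameters $\mu,\lambda,\nu,\kappa,c_0,\alpha$. This independence is essential for the parameter-robust stability arguments in the sequel, where this inf-sup condition will be combined with the weighted bounds from Lemma~\ref{cont-quasicoerc} to control the $\varphi$- and $p$-components in the $\bX$-norm.
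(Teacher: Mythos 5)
Your proposal is correct and is essentially the same as what the paper does: the paper simply recalls this as the classical Stokes inf-sup condition and cites Girault--Raviart, and your argument via the bounded right inverse of the divergence from $\rL^2_0(\Omega)$ into $\bH^1_0(\Omega)$ is exactly the standard proof underlying that citation. Your remark that $\beta_1$ depends only on $\Omega$ and not on the model parameters is precisely the point the paper relies on later.
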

%*******************robust-space**********************

\subsection{Parameter-robust well-posedness}
Before addressing the well-posedness of \eqref{eq:weak2} robustly with respect to $\varepsilon$, we first note that the bilinear forms defining the solution operator suggest to modify the metric in $\bX$ and include the following  particular parameter-weighting of the functional spaces%, so that the unique solution will be sought in the weighted product space 
\begin{align*} 
\bX_{\epsilon} & := {\sqrt{2\mu}}\bH^1_0(\Omega)\times\biggl[{\sqrt{\frac{1}{\kappa}}}\bL^2(\Omega)\cap \sqrt{\frac{\nu}{\kappa}} \bH_0(\vdiv,\Omega)\biggr]\times \biggl[\bL^2(\Omega)\cap \sqrt{\nu} \bH_0(\bcurl,\Omega)\biggr] \\
  &\qquad  \times\biggl[{\sqrt{ \frac{1}{\lambda}}} \rL^2(\Omega)\cap {\sqrt{ \frac{1}{2\mu}}} \rL_0^2(\Omega)\biggr] \times \biggl[ %\sqrt{c_0+ \frac{\alpha^2}{\lambda}}\rL^2(\Omega)\cap 
 {\sqrt{c_0+\frac{\alpha^2}{\lambda}}}\rL^2(\Omega)\cap \left(\sqrt{\kappa} \rH^1_0(\Omega) + \sqrt{\frac{\kappa}{\nu}} 
 \rL_0^2(\Omega)\right)
 \biggr].
 % \rL_0^2(\Omega)+\sqrt{\dfrac{\kappa}{\nu}}\rH^1_0(\Omega)+ \sqrt{\frac{\kappa}{\nu}}\rL_0^2(\Omega) \right) \right].
\end{align*}
An important observation is that $\bX_\epsilon$ contains the same vectors $\vec\bx$ that are in $\bX$, but which are bounded in the norm $\|\cdot\|_\epsilon$ to be defined later. 

Note also that, proceeding similarly as in  \cite{baerland20} (see also \cite{mardal11}), we have decomposed the space for fluid pressure as the sum 
\[\rL_0^2(\Omega)={\sqrt{\dfrac{\kappa}{\nu}}}\rL_0^2(\Omega) + 
                  {\sqrt{\kappa}}{\rH^1_0(\Omega)} 
                  \cap\rL_0^2(\Omega),\] 
and have endowed it with the norm $\|\cdot\|_{r}$ defined, thanks to \eqref{eq:sum}, as follows
%%%%%%%%%%%%%%%%%%%%%%%%%%%%%%%%%%%%%%%%%5
\begin{equation}\label{def:norm-r}
\Vert q\Vert^2_{r}:=\inf_{\displaystyle{s\in \sqrt{\kappa} \rH^1_0(\Omega)\cap\rL_0^2(\Omega)}}\left\{\left(\dfrac{\kappa}{\nu}+c_0\right) \|q-s\|_{0,\Omega}^2+c_0 \|s\|_{0,\Omega}^2+\Vert \sqrt{\kappa} \nabla s\Vert_{0,\Omega}^2 \right\}.
\end{equation}
With this norm we see that, for example,  the boundedness of the bilinear form $\hat{b}_1(\cdot,\cdot)$ can be written as follows
\begin{align*}
(q, \vdiv \bzeta)_{0,\Omega}&=(q-s+s, \vdiv \bzeta)_{0,\Omega}=-(\nabla s, \bzeta)_{0,\Omega}+(q-s, \vdiv \bzeta)_{0,\Omega}\\
&\leq\left\{ \kappa|s|_{1,\Omega}^2+\dfrac{\kappa}{\nu}\Vert q-s\Vert_{0,\Omega}^2 \right\}^{1/2} \left\{ \dfrac{1}{\kappa}\|\bzeta\|_{0,\Omega}^2+\dfrac{\nu}{\kappa}\Vert \vdiv \bzeta\Vert_{0,\Omega}^2 \right\}^{1/2},
\end{align*}
for all $s \in \rH_0^1(\Omega)$, so from \eqref{def:norm-r} we have  
\[
(q, \vdiv \bzeta)_{0,\Omega}\leq \Vert q\Vert_{r} \left\{ \dfrac{1}{\kappa}\|\bzeta\|_{0,\Omega}^2+\dfrac{\nu}{\kappa}\Vert \vdiv \bzeta\Vert_{0,\Omega}^2 \right\}^{1/2}.
\]
%%%%%%%%%%%%%%%%%%%%%%%%%%%%%%%%%%%%%%%%%%%%%%%%%%%%%%%%%%%%%%%%%%%%%%%%%%%%%%%
On the other hand, we have the following robust-in-$\epsilon$ inf-sup condition for $\hat{b}_1(\cdot,\cdot)$.
\begin{lemma}\label{inf-sup-lemma}
There exists $\beta_0>0$ independent of the parameters in $\epsilon$, such that 
\begin{equation}\label{inf-sup-ep}
\sup_{\bzeta\in \bH_0(\vdiv, \Omega)\setminus \{\boldsymbol{0}\}} \dfrac{-(q,\vdiv \bzeta)}{\left\{ \dfrac{1}{\kappa}\|\bzeta\|_{0,\Omega}^2+\dfrac{\nu}{\kappa}\Vert \vdiv \bzeta\Vert_{0,\Omega}^2 \right\}^{1/2}}\geq \beta_0 \Vert q\Vert_{r} \qquad \text{ for all  } q\in \rL_0^2(\Omega).
\end{equation}
\end{lemma}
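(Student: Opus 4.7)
The plan is to realise the sum-space inf-sup by exhibiting, for each fixed admissible decomposition $q = (q-s) + s$ with $s \in \sqrt{\kappa}\rH^1_0(\Omega) \cap \rL_0^2(\Omega)$, a test function $\bzeta \in \bH_0(\vdiv,\Omega)$ that simultaneously controls both summands of the functional appearing in \eqref{def:norm-r}, and then to pass to the infimum over $s$ at the end. Following the framework of \cite{baerland20} for Darcy-like problems in weighted sum spaces, I would assemble $\bzeta = a\,\bzeta_1 + b\,\bzeta_2$ as a linear combination addressing the $\rL^2_0$ and $\sqrt{\kappa}\rH^1_0$ pieces separately, with weights $a,b>0$ chosen to absorb the unavoidable cross terms.

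For the $\rL^2_0$ residual $q - s$, the classical Stokes inf-sup (Lemma~\ref{stokes-infsup}) furnishes $\bzeta_1 \in \bH^1_0(\Omega) \subset \bH_0(\vdiv,\Omega)$ with $-\vdiv\,\bzeta_1 = q - s$ and $|\bzeta_1|_{1,\Omega} \leq \beta_1^{-1}\|q-s\|_{0,\Omega}$; Poincar\'e's inequality then controls $\|\bzeta_1\|_{0,\Omega}$. Used alone, $\bzeta_1$ yields a ratio of order $\sqrt{\kappa/(1+\nu)}\|q-s\|_{0,\Omega}$, which is short by a factor related to $\nu$ and thus needs to be complemented by a second piece that captures the $\rH^1$ part. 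For the $\sqrt{\kappa}\rH^1_0$ term, the identity $-(s, \vdiv\,\bzeta)_{0,\Omega} = (\nabla s, \bzeta)_{0,\Omega}$ (valid since $s$ vanishes on $\Gamma$ in trace sense) suggests constructing $\bzeta_2 \in \bH_0(\vdiv,\Omega)$ as the image of $\kappa\nabla s$ under a Fortin-type projection into $\bH_0(\vdiv,\Omega)$; concretely, $\bzeta_2$ can be defined by Lax--Milgram as the Riesz representer of the functional $\boldsymbol{\eta} \mapsto \kappa(\nabla s, \boldsymbol{\eta})_{0,\Omega}$ in the inner product $(\cdot,\cdot)_{0,\Omega} + \nu(\vdiv\,\cdot, \vdiv\,\cdot)_{0,\Omega}$.

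Testing the defining identity of $\bzeta_2$ against itself yields the upper bound $\tfrac{1}{\kappa}\|\bzeta_2\|_{0,\Omega}^2 + \tfrac{\nu}{\kappa}\|\vdiv\,\bzeta_2\|_{0,\Omega}^2 \lesssim \kappa|s|_{1,\Omega}^2$ and the matching lower bound $-(s, \vdiv\,\bzeta_2)_{0,\Omega} \gtrsim \kappa|s|_{1,\Omega}^2$, once one invokes, as in \cite{baerland20}, the stability of this weighted Fortin-type lifting. Combining with Step 1 and absorbing the mixed contributions $(\nabla s, \bzeta_1)_{0,\Omega}$ and $-(q-s, \vdiv\,\bzeta_2)_{0,\Omega}$ through weighted Young inequalities, I expect to arrive at
\[
\frac{-(q, \vdiv\,\bzeta)}{\sqrt{\tfrac{1}{\kappa}\|\bzeta\|_{0,\Omega}^2 + \tfrac{\nu}{\kappa}\|\vdiv\,\bzeta\|_{0,\Omega}^2}} \;\gtrsim\; \sqrt{\Big(\tfrac{\kappa}{\nu} + c_0\Big)\|q-s\|_{0,\Omega}^2 + c_0\|s\|_{0,\Omega}^2 + \kappa|s|_{1,\Omega}^2},
\]
uniformly in $\kappa$, $\nu$ and $c_0$; the $c_0\|s\|_{0,\Omega}^2$ summand in \eqref{def:norm-r} is controlled either via Poincar\'e or by the $c_0$ weight already present on $\|q-s\|_{0,\Omega}^2$. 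Taking the infimum over $s$ then delivers the claim with a robust $\beta_0>0$. The main obstacle is precisely the construction of $\bzeta_2$: $\nabla s$ is neither in $\bH_0(\vdiv,\Omega)$ (its normal trace at $\Gamma$ need not vanish) nor in $\bH(\vdiv,\Omega)$ (since $\Delta s$ only belongs to $\rH^{-1}$), so producing a lifting whose inner product with $\nabla s$ is comparable \emph{both ways} to $\kappa|s|_{1,\Omega}^2$, in a constant independent of $\nu$, $\kappa$ and $c_0$, is the delicate point where the specific weighting of \eqref{def:norm-r} (the two simultaneous weights $\kappa/\nu$ and $c_0$ on the $\rL^2$ summand) is essential to close the argument.
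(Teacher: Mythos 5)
Your strategy is genuinely different from the paper's: you try to build, for an arbitrary fixed decomposition $q=(q-s)+s$, a test function controlling both summands, whereas the paper dualises. It identifies the left-hand side of \eqref{inf-sup-ep} with the norm of $\nabla q$ in the dual of the intersection space $\kappa^{-1/2}\bL^2(\Omega)\cap\sqrt{\nu/\kappa}\,\bH_0(\vdiv,\Omega)$, i.e.\ with a weighted sum norm of $\nabla q$ over $\bL^2(\Omega)+\nu^{-1/2}\bH^{-1}(\Omega)$, and then transfers the two separate bounds $\|\nabla^{-1}\|_{\cL(\bH^{-1},\rL^2_0)}\leq\beta_1^{-1}$ (the classical Ne\v{c}as/Stokes inf-sup) and $\|\nabla^{-1}\|_{\cL(\bL^2,\rH^1\cap\rL^2_0)}\leq c$ (Poincar\'e) to the sum space. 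The essential feature of that argument is that the infimum over decompositions is handled automatically by the sum-space structure; no estimate for an individual decomposition is ever required.

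That is precisely where your proposal has a genuine gap, which you flag as ``the delicate point'' but do not close — and it cannot be closed as stated. First, the displayed inequality cannot hold for \emph{each} fixed $s$: its left-hand side is bounded above by the dual norm of $\bzeta\mapsto-(q,\vdiv\bzeta)$, a finite number depending only on $q$, while the right-hand side is unbounded over admissible $s$ (take $\|s\|_{0,\Omega}\to\infty$); so the estimate can only be true for (near-)optimal decompositions. Second, the two-sided comparability $-(s,\vdiv\bzeta_2)\simeq\kappa|s|_{1,\Omega}^2$ with a constant independent of $\nu$ is false: for any $\boldsymbol{\eta}\in\bH_0(\vdiv,\Omega)$ one has $(\nabla s,\boldsymbol{\eta})=-(s,\vdiv\boldsymbol{\eta})\leq\nu^{-1/2}\|s\|_{0,\Omega}\bigl(\|\boldsymbol{\eta}\|_{0,\Omega}^2+\nu\|\vdiv\boldsymbol{\eta}\|_{0,\Omega}^2\bigr)^{1/2}$, so your Riesz representer satisfies $-(s,\vdiv\bzeta_2)\leq\kappa\nu^{-1}\|s\|_{0,\Omega}^2$, which is far below $\kappa|s|_{1,\Omega}^2$ once $\nu$ is large — and no other lifting in $\bH_0(\vdiv,\Omega)$ with the required weighted-norm bound can do better, since nearly divergence-free test functions cannot ``see'' $\nabla s$. (A symmetric loss afflicts $\bzeta_1$ for small $\nu$, where $\sqrt{\kappa/(1+\nu)}\ll\sqrt{\kappa/\nu}$.) The lemma survives only because the minimiser of the functional in \eqref{def:norm-r} shifts weight away from whichever summand is uncontrollable in the given parameter regime; exploiting this would force you to characterise the optimal $s$, which is exactly what the duality argument encodes. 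I would therefore abandon the per-decomposition construction and argue as in \cite{baerland20}: bound $\nabla^{-1}$ on each of the two spaces separately and invoke the fact that an operator bounded on two Banach spaces is bounded, with the natural norms, on their sum.
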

\begin{proof}
Owing to \cite{girault}, we know that there exists $\beta_1>0$ (independent of the parameters) such that 
\[
\Vert \nabla q\Vert_{-1,\Omega}\geq \beta_1 \Vert q \Vert_{0,\Omega} \text{ for all  } q\in \rL_0^2(\Omega).
\]
Then, for the operator $\nabla^{-1}:\nabla \rL_0^2(\Omega)\rightarrow \rL_0^2(\Omega)$ (where $\nabla \rL_0^2(\Omega)$ is a closed subspace of 
$\bH^{-1}(\Omega)$), we can deduce that 
\[
\Vert \nabla^{-1} \Vert_{\mathcal{L}(\nabla \rL_0^2(\Omega),\rL_0^2(\Omega))}\leq \beta_1^{-1}.
\]
Using the Poincar\'e inequality we can find a positive constant $c:=c(\Omega)$ such that 
\[
\Vert q\Vert_{0,\Omega}\leq c|q|_{1,\Omega} \text{ for all }q\in {\rH^1(\Omega)}\cap \rL_0^2(\Omega),
\]
or, equivalently, 
\begin{equation}\label{eq:aux-inf}
\Vert \nabla^{-1} \Vert_{\displaystyle\mathcal{L}(\nabla (\rH^1(\Omega)\cap \rL_0^2(\Omega)),\rH^1(\Omega)\cap \rL_0^2(\Omega))}\leq c.
\end{equation}
Then, we have that 
\begin{equation}\label{aux-03}
\Vert \nabla q \Vert_{\bL^2(\Omega)+\nu^{-1/2}\bH^{-1}(\Omega)}\geq \max\{c,\beta_1^{-1}\}\displaystyle\inf_{{\substack{q=q_1+q_2\\ q_1\in\rH_0^1(\Omega)\cap\rL_0^2(\Omega),\\q_2\in \rL_0^2(\Omega)}}}\left\{ |q_1|_{1,\Omega}^2+\dfrac{1}{\nu}\Vert q_2\Vert_{0,\Omega}^2 \right\}^{1/2}. 
\end{equation}
Multiplying \eqref{aux-03} by {$\sqrt{\kappa}$} and applying algebraic manipulations, we can conclude that  the inf-sup condition \eqref{inf-sup-ep} holds with {$\beta_0:=\max\{c,\beta_1^{-1}\}$}.
\end{proof}

%%%%%%%%%%%

As done in the previous subsection, the unique solvability analysis will also follow from the Banach--Ne\v{c}as--Babu\v{s}ka theory, but now using the space $\bX_{\epsilon}$ endowed with the new norm 
\begin{equation}\label{energynorm}
\|\vec{\bx}\|_{\epsilon}^2:=\|\vec{\bx}\|_{\bX}^2+\|p\|_{r}^2+\dfrac{1}{2\mu}\|\varphi\|_{0,\Omega}^2.
\end{equation}
Problem \eqref{eq:weak2} is written as: Find $\vec{\bx}\in \bX_\epsilon$ such that 
\[ \langle \cA_\epsilon(\vec{\bx}), \vec{\by}\rangle = \cF(\vec{\by}) \qquad \text{for all }\vec{\by}\in \bX_\epsilon,\]
or in operator form as follows
\begin{equation}\label{eq:operator}
\cA_\epsilon(\vec{\bx}) = \cF \qquad \text{in $\bX'_\epsilon$},\end{equation}
and the norm of the solution operator is defined as
\[\|\cA_{\epsilon}\|_{\mathcal{L}(\bX_{\epsilon},\bX_{\epsilon}')} := 
  \sup_{\vec{\bx}, \vec{\by} \in \bX_{\epsilon}\setminus\{\vec\cero\}}\frac{|\langle\cA(\vec{\bx}),\vec{\by}\rangle|}{\|\vec{\bx}\|_{\epsilon}\|\vec{\by}\|_{\epsilon}}.\]

\noindent Translating Theorem \ref{BNB} to the present context, we aim to prove that the operator $\cA_\epsilon$ is continuous, that is  
\begin{equation}
  \label{BNB:cont}
  \langle \cA_\epsilon(\vec{\bx}),\vec{\by} \rangle \lesssim \|\vec{\bx}\|_{\epsilon}\|\vec{\by}\|_{\epsilon} \qquad \forall \vec\bx,\vec\by \in \bX_\epsilon,\end{equation}
and that the following  global inf-sup condition is satisfied
\begin{equation} \label{BNB:inf-sup}
  \sup_{\vec \by \in \bX_\epsilon\setminus\{\vec\cero\}}
  \frac{\langle \cA_\epsilon(\vec{\bx}),\vec{\by} \rangle }
  {\|\vec{\by} \|_{\epsilon}}
  \gtrsim \|\vec{\bx}\|_{\epsilon} 
  \qquad \forall \vec\bx \in \bX_\epsilon.
\end{equation}
\begin{theorem}\label{inf-sup}
Let $\Vert \cdot\Vert_{\epsilon}$ be defined as in \eqref{energynorm}. 
Then the bilinear form induced by $\mathcal{A}_{\epsilon}$ (cf. \eqref{bilinearform}) is continuous and inf-sup stable under the norm 
$\Vert \cdot\Vert_{\epsilon}$, i.e., the conditions \eqref{BNB:cont} and \eqref{BNB:inf-sup} are satisfied. 
\end{theorem}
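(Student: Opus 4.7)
The plan is to verify the two hypotheses of the Banach--Ne\v{c}as--Babu\v{s}ka framework (Theorem~\ref{BNB}): boundedness \eqref{BNB:cont} and the global inf-sup condition \eqref{BNB:inf-sup} with constants independent of the parameter tuple $\epsilon=(\kappa,\nu,\mu,\lambda,\alpha,c_0)$. For boundedness, I would take each of the thirteen bilinear contributions assembled in \eqref{bilinearform} and apply the Cauchy--Schwarz-type bounds \eqref{aux-bounds}, then regroup the resulting scalar factors so that they match pieces of $\|\vec{\bx}\|_\epsilon$ and $\|\vec{\by}\|_\epsilon$. The forms $a_1,a_2,a_3$ pair naturally with the corresponding entries of \eqref{quasi-norm}. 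The $b_1$ terms are absorbed via the splitting $\|\varphi\|_{0,\Omega}|\bgamma|_{1,\Omega}=(\tfrac{1}{\sqrt{2\mu}}\|\varphi\|_{0,\Omega})(\sqrt{2\mu}|\bgamma|_{1,\Omega})$, which uses exactly the extra weight $\frac{1}{2\mu}\|\varphi\|_{0,\Omega}^2$ appearing in \eqref{energynorm}. For both $\hat{b}_1$ terms I would invoke the bound derived immediately after \eqref{def:norm-r}, namely $(q,\vdiv\bzeta)_{0,\Omega}\leq \|q\|_r(\tfrac{1}{\kappa}\|\bzeta\|_{0,\Omega}^2+\tfrac{\nu}{\kappa}\|\vdiv\bzeta\|_{0,\Omega}^2)^{1/2}$. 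The block $-a_4+b_3+b_3-a_5$ can be bounded by Cauchy--Schwarz after pairing it against $\frac{1}{\lambda}\|\varphi+\alpha p\|^2_{0,\Omega}+c_0\|p\|^2_{0,\Omega}$ from \eqref{quasi-norm}, using triangle inequalities and the weight $(c_0+\alpha^2/\lambda)^{1/2}$ to re-express $\|\varphi\|_{0,\Omega}$ and $\|p\|_{0,\Omega}$ in terms of the actual norm components.

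For the inf-sup condition I would construct a composite test vector in the spirit of Fortin. Starting from $\vec{\by}_0\in\bX$ supplied by Lemma~\ref{cont-quasicoerc}, which yields $\langle\cA_\epsilon(\vec{\bx}),\vec{\by}_0\rangle\gtrsim\|\vec{\bx}\|_{\bX}^2$ and $\|\vec{\by}_0\|_\bX\leq\sqrt{2}\|\vec{\bx}\|_\bX$, I am still missing the two norm pieces $\|p\|_r^2$ and $\frac{1}{2\mu}\|\varphi\|_{0,\Omega}^2$. To recover them I would use two inf-sup liftings: by Lemma~\ref{stokes-infsup} pick $\bgamma_\varphi\in\bH^1_0(\Omega)$ with $|\bgamma_\varphi|_{1,\Omega}\sim\frac{1}{2\mu\beta_1}\|\varphi\|_{0,\Omega}$ and $-(\varphi,\vdiv\bgamma_\varphi)\gtrsim\frac{1}{2\mu}\|\varphi\|^2_{0,\Omega}$; by Lemma~\ref{inf-sup-lemma} pick $\bzeta_p\in\bH_0(\vdiv,\Omega)$ with $(\tfrac{1}{\kappa}\|\bzeta_p\|_{0,\Omega}^2+\tfrac{\nu}{\kappa}\|\vdiv\bzeta_p\|_{0,\Omega}^2)^{1/2}\sim\|p\|_r$ and $-(p,\vdiv\bzeta_p)\gtrsim\|p\|_r^2$. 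The proposed test vector is
\[\vec{\by}:=\vec{\by}_0+\bigl(\delta_1\bgamma_\varphi,\,\delta_2\bzeta_p,\,\cero,\,0,\,0\bigr),\]
with two small positive parameters $\delta_1,\delta_2$ to be chosen later, independently of $\epsilon$.

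Evaluating $\langle\cA_\epsilon(\vec{\bx}),\vec{\by}\rangle$ then produces the original positive lower bound $\gtrsim\|\vec{\bx}\|_\bX^2$ together with two new gain terms of sizes $\delta_1\tfrac{\beta_1}{2\mu}\|\varphi\|_{0,\Omega}^2$ and $\delta_2\beta_0^2\|p\|_r^2$, plus a finite collection of cross terms $\delta_1 a_1(\bu,\bgamma_\varphi)$, $\delta_1 b_1(\bgamma_\varphi,\cdot)$, $\delta_2 a_2(\bv,\bzeta_p)$, $\delta_2 b_2(\bomega,\bzeta_p)$, and two $\delta_2 b_3$-type couplings between $\bzeta_p$ and the pressures. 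I would bound each cross term with the estimates \eqref{aux-bounds} combined with the normalisations of $\bgamma_\varphi$ and $\bzeta_p$, and then split via Young's inequality so that a portion is absorbed into the already-controlled quantity $\|\vec{\bx}\|_\bX^2$ and the remainder accounts for at most half of the $\delta_i$-weighted gains. Fixing $\delta_1,\delta_2$ small enough (universally in $\epsilon$) then delivers $\langle\cA_\epsilon(\vec{\bx}),\vec{\by}\rangle\gtrsim\|\vec{\bx}\|_\epsilon^2$, while the construction and the normalisations immediately yield $\|\vec{\by}\|_\epsilon\lesssim\|\vec{\bx}\|_\epsilon$, proving \eqref{BNB:inf-sup}.

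The hard part will be ensuring that every Young's inequality splitting and every algebraic regrouping carries a constant independent of $(\kappa,\nu,\mu,\lambda,\alpha,c_0)$. The scalings $\tfrac{1}{2\mu\beta_1}$ for $\bgamma_\varphi$ and the sum-space normalisation dictated by $\|\cdot\|_r$ for $\bzeta_p$ are dictated precisely by this requirement; in particular the $b_3$-type cross contributions involve the factor $\alpha/\lambda$ and must be absorbed using the $c_0+\alpha^2/\lambda$ weight already present in $\|\vec{\bx}\|_\bX^2$, which is the subtlest algebraic check in the argument.
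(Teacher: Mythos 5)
Your proposal follows essentially the same route as the paper: continuity via the bounds \eqref{aux-bounds} and the weighted sum-space estimate for $\hat b_1$, and the inf-sup via the test function $\vec{\by}_1$ from Lemma~\ref{cont-quasicoerc} augmented by the two liftings from Lemmas~\ref{stokes-infsup} and~\ref{inf-sup-lemma}, with Young's inequality absorbing the cross terms and $\epsilon$-independent choices of the two damping parameters. The only slight inaccuracy is your anticipation of $b_3$-type cross couplings between $\bzeta_p$ and the pressures: since $b_3$ only pairs the scalar test functions $\psi,q$, which vanish in both lifting vectors, these terms do not actually arise, so the absorption argument is in fact easier than you describe.
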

\begin{proof}
For the continuity of $\mathcal{A}_{\epsilon}$, it suffices to use \eqref{aux-bounds}, the norm definition \eqref{energynorm}, 
 Cauchy--Schwarz inequality,  
the definition of $\mathcal{A}_{\epsilon}$, and \eqref{inf-sup-ep}, to arrive at
\[
\langle \mathcal{A}_{\epsilon}(\vec{\bx} ),\vec{\by}  \rangle \leq 2 \|\vec{\boldsymbol{x}}\|_{\epsilon}\|\vec{\boldsymbol{y}}\|_{\epsilon}.
\]
For the global inf-sup, we take a given $\vec{\bx}\in \bX_\epsilon$, and for lemma \ref{cont-quasicoerc}, there exists $\vec{\by}_1\in \bX$ such that 
\[
\langle \mathcal{A}_\epsilon(\vec{\boldsymbol{x}}),\vec{\boldsymbol{y}}_1 \rangle 
  \geq \dfrac{1}{10} \|\vec{\boldsymbol{x}}\|^2_{\bX}\quad \text{and} \quad   \|\vec{\by}_1 \|_{\bX}\leq \sqrt{2} \|\vec{\bx}\|_{\bX}.
\]
Using Lemma \ref{inf-sup-lemma} and Lemma \ref{stokes-infsup} we can find $\bzeta_2$, $\bgamma_3$ and constants $C_1$, $C_2$, $\hat{C}_1$, $\hat{C}_2$ such that
\[
-(p,\vdiv \bzeta_2)\geq C_1 \Vert p \Vert_r^2 \quad \text{and} \quad \left\{ \dfrac{1}{\kappa}\Vert \bzeta_2 \Vert_{0,\Omega}+\dfrac{\nu}{\kappa}\Vert \vdiv \bzeta_2 \Vert_{0,\Omega}^2 \right\}^{1/2} \leq C_2 \Vert p \Vert_r. 
\]
and
\[
-(\varphi,\vdiv \bgamma)\geq \hat{C}_1 \frac{1}{2\mu}\Vert \varphi \Vert_{0,\Omega}^2 \quad \text{and} \quad \sqrt{2\mu}|\bgamma_3 |_{1,\Omega} \leq \hat{C}_2\frac{1}{\sqrt{2\mu}} \Vert \varphi \Vert_{0,\Omega}.
\]
Taking $\vec{\by}_2:=(\boldsymbol{0},\bzeta_2,\boldsymbol{0},0,0)$, $\vec{\by}_3:=(\bgamma_3,\boldsymbol{0},\boldsymbol{0},0,0)$, {$\delta>0$, and $\hat{\delta}>0$,}  we have that
\begin{align*}
 \langle \mathcal{A}_\epsilon(\vec{\boldsymbol{x}}),10\vec{\boldsymbol{y}}_1+\delta \vec{\by}_2+\hat{\delta} \vec{\by}_3 \rangle 
&\geq  \|\vec{\boldsymbol{x}}\|^2_{\bX}+\delta(a_2(\bv,\bzeta_2)+b_2(\bomega,\bzeta_2)+\hat{b}_1(\bzeta_2,p))+\hat{\delta}(a_1(\bu,\bgamma_3)+b_1(\bgamma_3,\varphi)) \\
&\geq  \|\vec{\boldsymbol{x}}\|^2_{\bX}-\delta \left\{ \dfrac{1}{\kappa}\Vert \bv \Vert_{0,\Omega}+\dfrac{\nu}{\kappa}\Vert \vdiv \bv \Vert_{0,\Omega}^2 \right\}^{1/2} \left\{ \dfrac{1}{\kappa}\Vert \bzeta_2 \Vert_{0,\Omega}+\dfrac{\nu}{\kappa}\Vert \vdiv \bzeta_2 \Vert_{0,\Omega}^2 \right\}^{1/2} \\
& \qquad  -\delta\dfrac{\nu}{\sqrt{\kappa}} \Vert \curl \bomega \Vert_{0,\Omega} \Vert \bzeta_2 \Vert_{0,\Omega}+\delta C_1 \Vert p \Vert_r^2-2\mu \hat{\delta} |\bu|_{1,\Omega}|\bgamma_3|_{1,\Omega}+\hat{\delta}\frac{\hat{C}_1}{2\mu}\Vert \varphi \Vert_{0,\Omega}^2 \\
&  \geq \|\vec{\boldsymbol{x}}\|^2_{\bX}-\dfrac{1}{2}\left(\dfrac{1}{\kappa}\Vert \bv \Vert_{0,\Omega}+\dfrac{\nu}{\kappa}\Vert \vdiv \bv \Vert_{0,\Omega}^2\right)-\dfrac{\nu}{2}\Vert \curl \bomega \Vert_{0,\Omega}^2 -\mu |\bu|_{1,\Omega}^2 +\delta C_1 \Vert p\Vert_r^2 \\
& \qquad +\hat{\delta}\frac{\hat{C}_1}{2\mu}\Vert \varphi \Vert_{0,\Omega}^2-\delta^2\dfrac{1}{\kappa}\Vert \bzeta_2 \Vert_{0,\Omega}-\delta^2\dfrac{\nu}{2\kappa}\Vert \vdiv \bzeta_2 \Vert_{0,\Omega}^2-2\mu \hat{\delta}^2|\bgamma_3|_{1,\Omega}^2 \\
& \geq \dfrac{1}{2} \|\vec{\boldsymbol{x}}\|^2_{\bX}+\delta( C_1-\delta C_2^2) \Vert p\Vert_r^2+\hat{\delta}\frac{1}{2\mu}\left(\hat{C}_1-\hat{\delta}\hat{C_2}^2 \right)\Vert \varphi \Vert_{0,\Omega}^2.
\end{align*}
Then, choosing $\delta:=\dfrac{C_1}{2C_2^2}$ and $\hat{\delta}:=\dfrac{\hat{C}_1}{2\hat{C}_2^2}$, we can deduce the estimates 
\begin{align*}
  \langle \mathcal{A}_\epsilon(\vec{\boldsymbol{x}}),10\vec{\boldsymbol{y}}_1+\delta \vec{\by}_2+\hat{\delta} \vec{\by}_3 \rangle  &\geq \dfrac{1}{2} \|\vec{\boldsymbol{x}}\|^2_{\bX}+\dfrac{C_1^2}{4C_2^2} \Vert p\Vert_r^2+\dfrac{\hat{C}_1^2}{4\hat{C}_2^2}\frac{1}{2\mu}\Vert \varphi \Vert_{0,\Omega}^2\geq \dfrac{1}{2} \min \left\{1,\dfrac{C_1^2}{2C_2^2},\dfrac{\hat{C}_1^2}{2\hat{C}_2^2} \right\}\|\vec{\boldsymbol{x}}\|^2_{\epsilon}, \qquad \text{and}\\
  \Vert 10\vec{\boldsymbol{y}}_1+\delta \vec{\by}_2+\hat{\delta} \vec{\by}_3  \Vert_{\epsilon} &\leq 10\sqrt{2} \Vert \vec{\bx} \Vert_{\epsilon}+\delta C_2 \Vert p \Vert_{r}+\hat{\delta} \hat{C}_2\frac{1}{\sqrt{2\mu}} \Vert \varphi \Vert_{0,\Omega} \leq \max\left\{10 \sqrt{2},\dfrac{C_1}{2C_2},\dfrac{\hat{C}_1}{2\hat{C}_2} \right\}\Vert \vec{\bx} \Vert_{\epsilon}.
\end{align*}
And from these relations we can conclude that:
\[
\sup_{\vec \by \in \bX_\epsilon\setminus\{\vec\cero\}}\frac{\langle \cA_\epsilon(\vec{\bx}),\vec{\by} \rangle }
  {\|\vec{\by} \|_{\epsilon}}\geq \frac{\langle \cA_\epsilon(\vec{\bx}),10\vec{\boldsymbol{y}}_1+\delta \vec{\by}_2+\hat{\delta} \vec{\by}_3 \rangle }
  {\|10\vec{\boldsymbol{y}}_1+\delta \vec{\by}_2+\hat{\delta} \vec{\by}_3 \|_{\epsilon}}\geq \dfrac{1}{2}\dfrac{\min \left\{1,\dfrac{C_1^2}{2C_2^2},\dfrac{\hat{C}_1^2}{2\hat{C}_2^2} \right\}\|\vec{\boldsymbol{x}}\|^2_{\epsilon}}{\max\left\{10 \sqrt{2},\dfrac{C_1}{2C_2},\dfrac{\hat{C}_1}{2\hat{C}_2} \right\}\Vert \vec{\bx} \Vert_{\epsilon}} \gtrsim\Vert \vec{\bx} \Vert_{\epsilon}.
\]
%%%%%%%%%%%%%%%%%%%%%%%%%%%%%%%%%%%%%%%%%%%%%%%%%%%%%%%%%%%%%%%%%%%%%%%%%%%%%%%%%%%%%%%%%%%%%%%%%%%%%%%%%%%%%%%%%%%%%%%%

\end{proof}

%%%%%%%%%%%%%%%%%%%%%%%%

\subsection{Operator preconditioning}

We recall from, e.g., \cite{kirby10,mardal11}, 
that since $\cA_\epsilon$ maps $\bX_\epsilon$ to its dual, when solving the discrete version of \eqref{eq:operator}, iterative methods are not directly applicable unless a modified problem is considered, for example 
\[ \cB\cA_\epsilon(\vec{\bx}) = \cB \cF \qquad \text{in $\bX_\epsilon$,}\]
where $\cB:\bX_\epsilon'\to \bX_\epsilon$ is an appropriately defined isomorphism. As usual, one can take $\cB$  as the Riesz map (self-adjoint and positive definite) whose inverse defines a scalar product $(\cdot,\cdot)_{\bX_\epsilon}$ on $\bX_\epsilon$, and the operator $\cB\cA_\epsilon$ is also self-adjoint with respect to this inner product. Then  
\[\langle\cA_\epsilon(\vec{\bx}),\by\rangle = (\cB\cA_\epsilon (\vec{\bx}), \vec{\by})_{\bX_\epsilon} \quad \text{and} \quad \| \cB\cA_\epsilon (\vec{\bx})\|_{\bX_\epsilon} =\|\cA_\epsilon (\vec{\bx}) \|_{\bX_\epsilon'},\]
and therefore, using the definition of the operator norms, it is readily deduced that 
\[ \| \cB\cA_\epsilon \|_{\cL(\bX_\epsilon,\bX_\epsilon)}=  \|\cA_\epsilon \|_{\cL(\bX_\epsilon,\bX_\epsilon')} \quad \text{and} \quad 
\| (\cB\cA_\epsilon)^{-1} \|_{\cL(\bX_\epsilon,\bX_\epsilon)}=  \|\cA_\epsilon^{-1} \|_{\cL(\bX_\epsilon,\bX_\epsilon')}.\]
Then, if an appropriate metric is chosen such that the norms of $\cA_\epsilon$ and of $\cA_\epsilon^{-1}$ are bounded by constants independent of the model parameters $\epsilon$, then the condition number of the preconditioned system will also be independent of the model parameters. 

Proceeding then similarly as in, e.g., \cite{lee19,baerland20,boon21}, we consider the following block-diagonal preconditioners focusing on the case of mixed boundary conditions (and therefore not including  contributions related to the zero-mean value of fluid and total pressure) 
\begin{subequations}
\label{eq:preconditioners}
\begin{align}
   \mathcal{B}_1 & = \begin{pmatrix} {(-\bdiv(2\mu \beps))}^{-1}  & 0&0 & 0 & 0\\
    0 &\!\!\bigl({\kappa^{-1}\bI - \frac{\nu}{\kappa} \nabla}\vdiv\bigr)^{-1} & 0 &0 &0 \\ 
    0 & 0 & \!\!(\bI + {\nu}\bcurl)^{-1} & 0 & 0 \\
      0 & 0 & 0 & \!\!\bigl(\bigl(\frac{1}{\lambda}+\frac{1}{2\mu}\bigr) I\bigr)^{-1} & 0 \\
      0 & 0 & 0 & 0 & \!\!\bigl(\bigl(c_0 + \frac{\alpha^2}{\lambda}+{{\kappa}}\bigr) I \bigr)^{-1} \end{pmatrix},\\[2ex]
   \mathcal{B}_2 & = \begin{pmatrix} {(-\bdiv(2\mu \beps))}^{-1}  & 0&0 & 0 & 0\\
    0 &\!\!\bigl({\kappa^{-1}}\bI - \frac{\nu}{\kappa}\nabla\vdiv\bigr)^{-1} & 0 &0 &0 \\ 
    0 & 0 & \!\!(\bI + {\nu}\bcurl)^{-1} & 0 & 0 \\
      0 & 0 & 0 & \!\!\bigl(\bigl(\frac{1}{\lambda}+\frac{1}{2\mu}\bigr) I \bigr)^{-1} & 0 \\
      0 & 0 & 0 & 0 & \!\!\bigl((c_0 + \frac{\alpha^2}{\lambda}%+\frac{\kappa}{\nu}
      ) I - {\kappa}\Delta\bigr)^{-1}  \end{pmatrix},\\[2ex]
       \mathcal{B}_3 & = \begin{pmatrix}  {(-\bdiv(2\mu \beps))}^{-1}  & 0&0 & 0 & 0\\
    0 &  \bigl({\kappa^{-1}}\bI -  {\bigl(1+\frac{\nu}{\kappa}\bigr)\nabla}\vdiv\bigr)^{-1}  & 0 &0 &0 \\ 
    0 & 0 &  (\bI + {\nu}\bcurl)^{-1}& 0 & 0 \\
      0 & 0 & 0 &  \multicolumn{2}{c}{\multirow{2}{*}{{$\widehat{\mathcal{B}}$}}} \\
      0 & 0 & 0  &        \end{pmatrix},
\end{align}\end{subequations}
with 
\[ {\widehat{\mathcal{B}} = \begin{pmatrix} \biggl(\fracd{1}{\lambda}+\fracd{1}{2\mu}\biggr) I%+ I_0 
      & \fracd{\alpha}{\lambda} I \\  \fracd{\alpha}{\lambda}I & \biggl(1 + c_0 + \fracd{\alpha^2}{\lambda}\biggr) I\end{pmatrix}^{-1}
        +  \begin{pmatrix} \biggl(\fracd{1}{\lambda}+\fracd{1}{2\mu}\biggr) I%+ I_0 
        & \fracd{\alpha}{\lambda} I \\  \fracd{\alpha}{\lambda}I & \biggl(c_0 + \fracd{\alpha^2}{\lambda}\biggr) I -{\kappa} \Delta \end{pmatrix}^{-1}}.\]
        
%where $I_0$ in the fourth block denotes the mapping between $L^2(\Omega)$ and the dual of $L_0^2(\Omega)$. 

Note that only $\cB_3$ results from the Riesz map corresponding to $\bX_\epsilon$ with the complete norm as in \eqref{energynorm}, while $\cB_1,\cB_2$ are 
approximations of $\cB_3$. In particular, $\cB_1$ simply considers the parameter weighting suggested by the weak formulation \eqref{eq:weak} {(but taking into account the scaling of the vorticity seminorm according to Remark~\ref{rem:scale0})} combined with the Riesz map associated with the natural regularity of that formulation, and $\cB_2$ includes also the sum of spaces leading to the pressure Laplacian forms  which are key in achieving robustness for Darcy-type problems \cite{baerland20}. The full form $\cB_3$ also includes the non-standard Brezzi--Braess type of block $\widehat{\mathcal{B}}$ for total and fluid pressures, which is  needed in perturbed saddle-point problems with penalty as proposed in \cite[Section 4]{boon21} (see also \cite{braess96}).

%*****************************************
\section{Analysis of a finite element method}\label{sec:FE}
%*****************************************
Let $\cT_h$ denote a family of   tetrahedral meshes (triangular in 2D) on $\Omega$ and denote by $\cE_h$ the set of all facets (edges in 2D) in the mesh. 
By $h_K$ we denote the diameter of the element $K$ and by $h_F$ we denote the length/area of the facet $F$. As usual, by $h$ we denote the maximum of the diameters of elements in $\cT_h$. For all meshes we assume that they are sufficiently regular (there exists a uniform positive constant $\eta_1$ such that each element $K$ is star-shaped with respect to a ball of radius greater than $\eta_1 h_K$). It is also assumed that there exists $\eta_2>0$ such that for each element and every facet $F\in \partial K$, we have that $h_F\geq \eta_2 h_K$, see, e.g., \cite{quarteroni,ern04}. By $\PP_k (\Theta)$ we will denote be the space of polynomials of total degree at most $k$ defined locally on the domain $\Theta$, and denote by $\bP_k(\Theta)$ and $\mathbb{P}_k(\Theta)$ their vector- and tensor-valued counterparts, respectively. 
By  $\cE_h$ we will denote the set of all facets and will distinguish between facets lying on the interior and the two sub-boundaries  $\cE_h = \cE_h^\mathrm{int} \cup \cE_h^\Gamma$. 

For smooth  scalar fields $w$ defined on~$\cT_h$,  the symbol $w^\pm$  denotes the traces of~$\bw$ on~$e$ that are  the extensions from the interior of the two elements $K^+$ and~$K^-$ sharing the facet $e$. The symbols  $\avg{\cdot}$ and $\jump{\cdot }$ denote, respectively, the average and jump operators defined as 
$\avg{w} := \frac12 (w^-+w^+)$, $ \jump{w } :=  (w^- - w^+)$. The element-wise action of a differential operator is denoted with a subindex $h$, for example, $\nabla_h$  will  denote  the broken gradient operator. 

The discrete spaces that we consider herein correspond, for $k\geq 0$, to the {generalised Taylor--Hood element pair ($\mathbf{P}_{k+2}-\mathrm{P}_{k+1}$)} for the displacement / total pressure approximation, the H(div)-conforming Raviart--Thomas elements of degree $k$ (denoted $\mathbf{RT}_k$) for velocity approximation, the H(curl)-conforming N\'ed\'elec elements of the first kind and order $k+1$ (denoted $\mathbf{ND}_{k+1}$) for filtration vorticity (see, e.g., \cite{brezzi91,gatica14} for precise definitions of these families of spaces), and piecewise polynomials of degree $k$ for the approximation of interstitial pressure 
\begin{align}
 \nonumber \bU_h &:= \{\bu_h \in \bH^1_0(\Omega): \bu_h|_K \in \mathbf{P}_{k+2}(K),\quad \forall K\in \cT_h\},\\
  \nonumber  \bV_h &:= \{\bv_h \in \bH_0(\vdiv,\Omega): \bv_h|_K \in \mathbf{RT}_k(K),\quad \forall K\in \cT_h\},\\
 \nonumber   \bW_h &:= \{\bomega_h\in \bH_0(\bcurl,\Omega): \bomega_h|_K \in \mathbf{ND}_{k+1}(K),\quad \forall K\in \cT_h\},\\
\label{eq:fespaces}  {\bM^m_{h}} &:= {\{\bu_h \in \bL^2(\Omega): \bu_h|_K \in \mathbf{P}_m(K),\quad \forall K\in \cT_h\}}, \\
  \nonumber   \rU_h &:= \{u_h \in \rH^1_0(\Omega): u_h|_K \in \mathrm{P}_{k}(K),\quad \forall K\in \cT_h\},\\
 \nonumber   \rZ_h &:= \{\psi_h \in {C^0(\Omega)}: 
 \nonumber   \psi_h|_K \in {\mathrm{P}_{k+1}(K)},\quad \forall K\in \cT_h\}, \\
%  \rQ_h := \rZ_h.
 \nonumber \rQ_h &:= \{q_h \in \rL^2(\Omega): q_h|_K \in \mathrm{P}_k(K),\quad \forall K\in \cT_h\}.
\end{align}
Note that other combinations of finite element families are feasible as well (as long as appropriate discrete inf-sup conditions are satisfied). {In 2D we will consider the same space $ \rQ_h$ for both total and fluid pressures and we will use the $\mathbf{P}_2-\mathrm{P}_0$ pair for displacement and total pressure approximation.}

The (conforming) finite element scheme  associated with \eqref{eq:weak2} reads: Find 
$$(\bu_h,\bv_h,\bomega_h,\varphi_h,p_h)\in \bX_h  :=  \bU_h\times\bV_h\times \bW_h \times \rZ_h \times \rQ_h,$$ 
such that
\begin{subequations}\label{eq:weak-discrete}
\begin{alignat}{5}
&a_1(\bu_h,\bgamma_h) &                &                       &+\;b_1(\bgamma_h,\varphi_h)&                       &=&\;B(\bgamma_h)&\qquad\forall \bgamma_h\in\bU_h, \\
&                 &a_2(\bv_h,\bzeta_h) &+\;b_2(\bomega_h,\bzeta_h) &                       &+\;\hat{b}_1(\bzeta_h,p_h) &=&\;F(\bzeta_h) &\qquad\forall\bzeta_h\in \bV_h,\\
&                 &b_2(\btheta_h,\bv_h)&-\;a_3(\bomega_h,\btheta_h)&                       &                       &=&\;0         &\qquad\forall \btheta_h\in \bW_h,\\
&b_1(\bu_h,\psi_h)    &                &                       &-\;a_4(\varphi_h,\psi_h)   &+\;b_3(p_h,\psi_h)         &=&\;0         &\qquad\forall \psi_h\in \rZ_h,\\
&                 &\hat{b}_1(\bv_h,q_h)&                       &+\;b_3(q_h,\varphi_h)      &-\;a_5(p_h,q_h)            &=&\; G (q_h)&\qquad \forall q_h\in \rQ_h.
\end{alignat}
\end{subequations}
Similarly as in the continuous case, we define 
\begin{align*}
\bX_{\epsilon,h}  &:=  {2\mu}\bU_h \times \left[ \sqrt{\frac{1}{\kappa}}\bM_h^{k+1}\cap \sqrt{\dfrac{\nu}{\kappa}}\bV_h\right]\times [\bM_{h}^{k+1} \cap \sqrt{\nu}\bW_h]\times \left[{\dfrac{1}{\lambda}} \rQ_{h}\cap {\dfrac{1}{2\mu}} \rZ_h \right] \times \left[ \sqrt{c_0}\rQ_{h} \cap \left(\sqrt{\dfrac{\kappa}{\nu}} \rQ_h+ \sqrt{\kappa}\rU_h \right) \right],
\end{align*}
{and the associated discrete norm is
\begin{align}
\|\vec{\bx}_h\|_{\eps,h}^2& :=  2\mu \|\beps(\bu_h)\|^2_{0,\Omega}+ \frac{1}{\kappa}\|\bv_h\|^2_{0,\Omega} + \frac{\nu}{\kappa}\|\vdiv\bv_h\|^2_{0,\Omega} +\|\bomega_h\|^2_{0,\Omega} +\nu\|\bcurl\bomega_h\|^2_{0,\Omega} + \frac{1}{2\mu}\|\varphi_h\|_{0,\Omega}^2 \nonumber\\
&   \quad + \inf_{s_h \in \rU_h}\biggl[\left(c_0+\frac{\kappa}{\nu} \right)\|p_h - s_h\|_{0,\Omega}^2 +c_0\|s_h\|^2+ \bigl\|\sqrt{ \kappa}\nabla_hs_h\bigr\|^2_{0,\Omega}\biggr] + \frac{1}{\lambda} \| \varphi_h + \alpha p_h\|^2_{0,\Omega} + c_0 \|p_h\|^2_{0,\Omega} . \label{norm-discrete}
\end{align}}
%*****************************************

As in the continuous case, now problem \eqref{eq:weak-discrete} is written as: Find $\vec{\bx}_h\in \bX_{\epsilon,h}$ such that 
\[ \langle \cA_\epsilon(\vec{\bx}_h), \vec{\by}_h\rangle = \cF_h(\vec{\by}_h) \qquad \text{for all }\vec{\by}_h\in \bX_{\epsilon,h},\]
or in operator form as follows
\begin{equation}\label{eq:operator-discrete}
\cA_\epsilon(\vec{\bx}_h) = \cF_h \qquad \text{in $\bX'_{\epsilon,h}$}.\end{equation}
\begin{lemma}
There exists $\beta_1>0$ independent of the parameters in $\epsilon$ and $h$, such that 
\begin{equation}\label{inf-sup-ep-discrete}
\sup_{\bzeta_h\in \bV_h\setminus \{\boldsymbol{0}\}} \dfrac{-(q_h,\vdiv \bzeta_h)_{0,\Omega}}{\left\{ \dfrac{1}{\kappa}\|\bzeta_h\|_{0,\Omega}^2+\dfrac{\nu}{\kappa}\Vert \vdiv \bzeta_h\Vert_{0,\Omega}^2 \right\}^{1/2}}\geq \beta_1 \Vert q_h\Vert_{r} \qquad \text{ for all  } q_h\in \rQ_h.
\end{equation}
\end{lemma}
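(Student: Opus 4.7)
My plan is to mirror the continuous argument of Lemma \ref{inf-sup-lemma} at the discrete level via a Fortin-type interpolation. First I would apply Lemma \ref{inf-sup-lemma} to $q_h \in \rQ_h \subset \rL_0^2(\Omega)$ itself, which supplies a continuous vector field $\bzeta \in \bH_0(\vdiv, \Omega)$ for which
\[
-(q_h, \vdiv \bzeta)_{0,\Omega} \geq \beta_0 \|q_h\|_r \biggl( \dfrac{1}{\kappa}\|\bzeta\|_{0,\Omega}^2 + \dfrac{\nu}{\kappa}\|\vdiv \bzeta\|_{0,\Omega}^2 \biggr)^{1/2}.
\]

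Next, I would transfer $\bzeta$ to $\bV_h$ through a Fortin-type operator $\Pi_h^{\bV}:\bH_0(\vdiv,\Omega)\to \bV_h$ enjoying, uniformly in $h$ and in the parameters $\epsilon$, the following two properties: (i) the commuting diagram identity $(\vdiv \Pi_h^{\bV}\bzeta, q_h)_{0,\Omega} = (\vdiv \bzeta, q_h)_{0,\Omega}$ for every $q_h \in \rQ_h$, which for the $\mathbf{RT}_k$--$\mathrm{P}_k$ pair is inherited from the $\rL^2$-projection character of the standard Raviart--Thomas interpolant onto $\rQ_h$ (see, e.g., \cite{brezzi91,gatica14}); and (ii) stability in the weighted norm,
\[
\dfrac{1}{\kappa}\|\Pi_h^{\bV}\bzeta\|_{0,\Omega}^2 + \dfrac{\nu}{\kappa}\|\vdiv \Pi_h^{\bV}\bzeta\|_{0,\Omega}^2 \lesssim \dfrac{1}{\kappa}\|\bzeta\|_{0,\Omega}^2 + \dfrac{\nu}{\kappa}\|\vdiv \bzeta\|_{0,\Omega}^2.
\]
Because the multiplicative weights $1/\kappa$ and $\nu/\kappa$ appear identically on both sides, the weighted stability reduces to the unweighted $\bH(\vdiv)$-stability. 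Setting $\bzeta_h := \Pi_h^{\bV}\bzeta$, property (i) preserves the pairing with $q_h$ while (ii) bounds the denominator; dividing then yields \eqref{inf-sup-ep-discrete} with $\beta_1 := \beta_0/C$, independent of $h$ and of the parameters.

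The main obstacle is that the classical Raviart--Thomas interpolant requires $\bzeta$ to have $\bH^{\delta}$-regularity for some $\delta>0$, which is not automatic for arbitrary elements of $\bH_0(\vdiv,\Omega)$. To circumvent this I would invoke an $\bH(\vdiv)$-stable commuting quasi-interpolation, whose stability constants are mesh- and parameter-independent. Alternatively, since the problem is finite-dimensional, $\bzeta_h$ can be constructed directly as a tuned linear combination of two discrete liftings: an interpolation of $-\nabla s_h^\ast$ into $\bV_h$ for the smooth component, where $s_h^\ast \in \rU_h$ realizes the discrete infimum defining $\|q_h\|_r$, plus a lifting of the residual $q_h - s_h^\ast$ furnished by the standard Raviart--Thomas discrete inf-sup. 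A final application of Young's inequality, with coefficients tuned as in the proof of Lemma \ref{cont-quasicoerc}, then delivers the same conclusion without requiring an abstract Fortin operator.
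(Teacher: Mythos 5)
Your proposal is sound in its overall strategy and takes a genuinely different route from the paper. The paper does not transfer the continuous result through an interpolation operator: its (very terse) proof re-runs the continuous argument of Lemma \ref{inf-sup-lemma} entirely at the discrete level, replacing the continuous Stokes inf-sup by the standard discrete inf-sup of the pair $(\bV_h,\rQ_h)$ and then repeating the scaling/sum-space step \eqref{aux-03}; your Fortin-type argument instead inherits the constant $\beta_0$ directly from the continuous lemma and localises all mesh-dependence in the properties of a single operator, which is arguably the more transparent (and more standard) way to make the claim rigorous. One point needs care, however: your remark that the weighted stability (ii) ``reduces to the unweighted $\bH(\vdiv)$-stability'' is imprecise. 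Boundedness in the graph norm would only give $\|\Pi_h^{\bV}\bzeta\|_{0,\Omega}\lesssim\|\bzeta\|_{0,\Omega}+\|\vdiv\bzeta\|_{0,\Omega}$, and since the two terms carry the different weights $1/\kappa$ and $\nu/\kappa$ this is not parameter-robust for small $\nu$; what is required are the \emph{separate} bounds $\|\Pi_h^{\bV}\bzeta\|_{0,\Omega}\lesssim\|\bzeta\|_{0,\Omega}$ and $\|\vdiv\Pi_h^{\bV}\bzeta\|_{0,\Omega}\lesssim\|\vdiv\bzeta\|_{0,\Omega}$. These do hold for the commuting, $\bL^2$-stable quasi-interpolants you invoke (the divergence bound following from the commuting identity together with $\rL^2$-stability of the scalar projection), but you should also verify that the scalar projection in the commuting diagram actually preserves the pairing with every $q_h\in\rQ_h$ (your property (i)); for a non-self-adjoint projection this is not automatic, and one then falls back on the classical two-operator Fortin composition or on your alternative direct construction. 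With either fix the argument closes and yields $\beta_1$ independent of $h$ and of the parameters, as claimed.
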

\begin{proof}
{The proof requires to assume that the continuous inf-sup condition holds. Then, similarly to the proof of that result (Lemma \ref{inf-sup-lemma}), the first part (steps until \eqref{eq:aux-inf}) is a consequence of the fact that the spaces $\bV_h$ and $\rQ_h$ satisfy the usual discrete inf-sup condition for the Stokes problem. Then, it suffices to follow the scaling argument in \eqref{aux-03}, also valid at the discrete level, to complete the desired condition.} 
\end{proof}

%****************************
Analogously to the continuous case, we need the following conditions to be satisfied to guarantee existence and uniqueness to problem (\ref{eq:operator-discrete}):
\begin{subequations}\label{eq:th42}
\begin{align}
  \label{BNB:contd}
  \langle \cA_\epsilon(\vec{\bx}_h),\vec{\by}_h \rangle &\lesssim \|\vec{\bx}_h\|_{\epsilon}\|\vec{\by}_h\|_{\epsilon,h} \qquad \forall \vec\bx_h,\vec\by_h \in \bX_{\epsilon,h},\\
   \label{BNB:inf-supd}
  \sup_{\vec \by_h \in \bX_{\epsilon,h}\setminus\{\vec\cero\}}
  \frac{\langle \cA_\epsilon(\vec{\bx}_h),\vec{\by}_h \rangle }
  {\|\vec{\by}_h \|_{\epsilon,h}}
  &\gtrsim \|\vec{\bx}_h\|_{\epsilon,h} 
  \qquad \forall \vec\bx_h \in \bX_{\epsilon,h}.
\end{align}\end{subequations}

\begin{theorem}\label{inf-supd}
Let $\Vert \cdot\Vert_{\epsilon,h}$ be defined as in \eqref{norm-discrete}. 
Then the bilinear form induced by $\mathcal{A}_{\epsilon}$ (cf. \eqref{bilinearform}) is continuous and inf-sup stable under the norm 
$\Vert \cdot\Vert_{\epsilon,h}$, i.e., the conditions \eqref{BNB:contd} and \eqref{BNB:inf-supd} are satisfied. 
\end{theorem}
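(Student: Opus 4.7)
The overall strategy will be to mimic the continuous proof of Theorem~\ref{inf-sup} line by line, exploiting two structural properties of the discretisation: the spaces in \eqref{eq:fespaces} are conforming subspaces of the corresponding continuous ones, so the boundedness estimates \eqref{aux-bounds} transfer verbatim to discrete arguments; and the pair $(\bW_h,\bV_h)=(\mathbf{ND}_{k+1},\mathbf{RT}_k)$ is aligned with the discrete de Rham complex, giving $\bcurl\bW_h\subset \bV_h$.

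For the continuity \eqref{BNB:contd} I will simply note that $\bX_{\epsilon,h}\subset \bX_\epsilon$ as a set, that the discrete norm $\|\cdot\|_{\epsilon,h}$ in \eqref{norm-discrete} dominates each contribution appearing when bounding $\langle\mathcal{A}_\epsilon(\vec\bx_h),\vec\by_h\rangle$ through \eqref{aux-bounds}, and that the discrete sum-space inf-sup \eqref{inf-sup-ep-discrete} controls the $\hat{b}_1$ term by the fluid-pressure part of $\|\cdot\|_{\epsilon,h}$. Assembling these contributions by Cauchy--Schwarz gives $\langle\mathcal{A}_\epsilon(\vec\bx_h),\vec\by_h\rangle\lesssim\|\vec\bx_h\|_{\epsilon,h}\|\vec\by_h\|_{\epsilon,h}$ with a constant independent of $h$ and of the parameters in $\epsilon$.

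For the global discrete inf-sup \eqref{BNB:inf-supd} I will reproduce the three-summand test function $10\vec\by_{1,h}+\delta\vec\by_{2,h}+\hat\delta\vec\by_{3,h}$ used in the continuous proof. Here $\vec\by_{1,h}$ will come from a discrete analogue of Lemma~\ref{cont-quasicoerc}; $\vec\by_{2,h}=(\boldsymbol{0},\bzeta_{2,h},\boldsymbol{0},0,0)$ will be supplied by \eqref{inf-sup-ep-discrete} applied to $p_h$, with constants $C_1,C_2$; and $\vec\by_{3,h}=(\bgamma_{3,h},\boldsymbol{0},\boldsymbol{0},0,0)$ will be furnished by the standard Stokes discrete inf-sup for the generalised Taylor--Hood pair $(\mathbf{P}_{k+2},\mathrm{P}_{k+1})$ applied to $\varphi_h$, which is known to hold uniformly in $h$, with constants $\hat{C}_1,\hat{C}_2$. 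With these three pieces at hand and with the same algebraic choice $\delta=C_1/(2C_2^2)$, $\hat\delta=\hat{C}_1/(2\hat{C}_2^2)$, the Young-inequality manipulations from the proof of Theorem~\ref{inf-sup} go through unchanged and deliver \eqref{BNB:inf-supd}.

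The crux of the argument, and the one point where conformity alone is \emph{not} sufficient, is the discrete analogue of Lemma~\ref{cont-quasicoerc}. In the continuous proof the test function $\bzeta+\tfrac{1}{2}\sqrt{\kappa\nu}\,\bcurl\btheta$ is placed in the velocity slot; at the discrete level I must ensure that whenever $\btheta_h\in\bW_h$ the field $\bcurl\btheta_h$ actually lies in $\bV_h$, so that the whole tuple remains admissible in $\bX_{\epsilon,h}$. This is precisely the exactness property $\bcurl(\mathbf{ND}_{k+1}(\cT_h))\subset \mathbf{RT}_k(\cT_h)$ of the finite element de Rham complex that motivated the choice of $\bW_h$ and $\bV_h$ in \eqref{eq:fespaces}. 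Once this inclusion is invoked, the Young-inequality computations \eqref{young1}--\eqref{constant2} and the triangle-inequality bound on $\|\vec\by_h\|_{\bX}$ replicate verbatim at the discrete level, and the remainder of the argument is routine.
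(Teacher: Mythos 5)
Your proposal is correct and follows essentially the same route as the paper: the paper's (much terser) proof likewise rests on the discrete Stokes inf-sup stability of the pairs $(\bU_h,\rZ_h)$ and $(\bV_h,\rQ_h)$ to produce discrete versions of $\vec\by_2$ and $\vec\by_3$, and on the inclusion $\bcurl\bW_h\subset\bV_h$ to make the discrete analogue of Lemma~\ref{cont-quasicoerc} go through, after which the argument of Theorem~\ref{inf-sup} is repeated verbatim. You correctly single out the de Rham exactness property as the one point where conformity alone does not suffice, which is precisely the observation the paper relies on.
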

\begin{proof}
{We use again that the pairs $(\bU_h,\rZ_h)$ and $(\bV_h,\rQ_h)$ are inf-sup stable spaces for the usual bilinear form in Stokes problem, which ensures  that we can find discrete versions $\vec{\by}_{h,2}$, $\vec{\by}_{h,3}$ of the  tuples $\vec{\by}_2$ and $\vec{\by}_3$, respectively, constructed in Theorem \ref{inf-sup}. We also note that $\bcurl \bW_h \subset \bV_h$, and therefore we can prove the discrete version of Lemma \ref{cont-quasicoerc}.  Then the desired result is a consequence of repeating the arguments used in the proof of Theorem \ref{inf-sup}.}
\end{proof}

%*****************************************
\begin{theorem}\label{Cea:estimate}
For given $\boldsymbol{b},\boldsymbol{f}\in \bL^2(\Omega)$ and $g\in L^2(\Omega)$,  problem \eqref{eq:weak-discrete} has a unique solution $(\bu_h,\bv_h,\bomega_h,\varphi_h,q_h) \in \bX_{\epsilon,h}$.
In addition, the solution satisfies the following continuous dependence on data
\[
\Vert (\bu_h,\bv_h,\bomega_h,\varphi_h,p_h)\Vert_{\epsilon}\lesssim (\|\boldsymbol{b}\|_{0,\Omega}+\|\ff\|_{0,\Omega}+\|g\|_{0,\Omega}),
\]
and the following C\'ea estimate
\begin{align*}
& \Vert (\bu-\bu_h,\bv-\bv_h,\bomega-\bomega_h,\varphi-\varphi_h,p-p_h)\Vert_{\epsilon} \leq \bigl(1+\alpha^{-1}\Vert \mathcal{A}_\epsilon \Vert_{\mathcal{L}(\bX_\epsilon,\bX'_\epsilon)}\bigr) \Vert (\bu-\bgamma_h,\bv-\bzeta_h,\bomega-\btheta_h,\varphi-\psi_h,p-q_h)\Vert_{\epsilon},
\end{align*}
for all $(\bgamma_h,\bzeta_h,\btheta_h,\psi_h,q_h)\in \bX_{\epsilon,h} $, where $\alpha$ is the positive constant associated with \eqref{BNB:inf-supd}.
\end{theorem}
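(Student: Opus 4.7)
The proof plan splits naturally into the three claims of the statement: existence/uniqueness, the stability bound, and the C\'ea estimate. All three are direct corollaries of Theorem~\ref{inf-supd}, so this proposal is essentially a bookkeeping exercise built on top of the discrete continuity and inf-sup already established.

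First, I would invoke the Banach--Ne\v{c}as--Babu\v{s}ka framework of Theorem~\ref{BNB} in the finite-dimensional setting $E_1=E_2=\bX_{\eps,h}$. Because $\bX_{\eps,h}$ is finite dimensional, the inf-sup condition \eqref{BNB:inf-supd} automatically implies condition (BNB1) (injectivity and surjectivity coincide for square linear systems), so Theorem~\ref{inf-supd} furnishes both hypotheses of Theorem~\ref{BNB}. Consequently \eqref{eq:operator-discrete} has a unique solution $\vec\bx_h\in\bX_{\eps,h}$ for every $\cF_h\in\bX'_{\eps,h}$.

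Second, for the continuous dependence on data I would test the discrete inf-sup condition \eqref{BNB:inf-supd} with $\vec\bx_h$ itself, use the equation $\cA_\eps(\vec\bx_h)=\cF_h$, and bound the right-hand side. A direct Cauchy--Schwarz application to each of the linear functionals $B,F,G$ (combined with Poincar\'e's inequality on $\bU_h\subset\bH^1_0(\Omega)$ and the $\bL^2$-continuity of $\bV_h\hookrightarrow\bL^2(\Omega)$ and $\rQ_h\hookrightarrow\rL^2(\Omega)$) yields
\[
|\cF_h(\vec\by_h)|\lesssim\bigl(\|\bb\|_{0,\Omega}+\|\ff\|_{0,\Omega}+\|g\|_{0,\Omega}\bigr)\|\vec\by_h\|_{\eps,h},
\]
and dividing by $\|\vec\by_h\|_{\eps,h}$ and taking the supremum gives the stated bound, with hidden constant determined by the inf-sup constant from Theorem~\ref{inf-supd}.

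Third, for the C\'ea estimate I would use the standard Galerkin orthogonality: since both the continuous solution $\vec\bx$ and the discrete solution $\vec\bx_h$ satisfy the same equation tested against elements of $\bX_{\eps,h}\subset\bX_\eps$, one has $\langle\cA_\eps(\vec\bx-\vec\bx_h),\vec\by_h\rangle=0$ for every $\vec\by_h\in\bX_{\eps,h}$. Fix an arbitrary $\vec{\boldsymbol{w}}_h=(\bgamma_h,\bzeta_h,\btheta_h,\psi_h,q_h)\in\bX_{\eps,h}$ and write $\vec\bx-\vec\bx_h=(\vec\bx-\vec{\boldsymbol{w}}_h)-(\vec\bx_h-\vec{\boldsymbol{w}}_h)$. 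Applying the discrete inf-sup \eqref{BNB:inf-supd} to $\vec\bx_h-\vec{\boldsymbol{w}}_h\in\bX_{\eps,h}$ and using Galerkin orthogonality to rewrite the numerator in terms of $\vec\bx-\vec{\boldsymbol{w}}_h$,
\[
\alpha\,\|\vec\bx_h-\vec{\boldsymbol{w}}_h\|_{\eps,h}\leq\sup_{\vec\by_h}\frac{\langle\cA_\eps(\vec\bx_h-\vec{\boldsymbol{w}}_h),\vec\by_h\rangle}{\|\vec\by_h\|_{\eps,h}}=\sup_{\vec\by_h}\frac{\langle\cA_\eps(\vec\bx-\vec{\boldsymbol{w}}_h),\vec\by_h\rangle}{\|\vec\by_h\|_{\eps,h}}\leq\|\cA_\eps\|_{\cL(\bX_\eps,\bX'_\eps)}\|\vec\bx-\vec{\boldsymbol{w}}_h\|_{\eps},
\]
where the last inequality uses continuity of $\cA_\eps$ on the larger space $\bX_\eps$ (this is where the compatibility $\bX_{\eps,h}\subset\bX_\eps$ matters, so the norms $\|\cdot\|_\eps$ and $\|\cdot\|_{\eps,h}$ coincide on $\bX_{\eps,h}$, modulo the $\inf_{s_h\in\rU_h}$ in the fluid-pressure contribution, which is bounded above by the corresponding continuous infimum since $\rU_h\subset\rH^1_0(\Omega)$). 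A triangle inequality $\|\vec\bx-\vec\bx_h\|_\eps\leq\|\vec\bx-\vec{\boldsymbol{w}}_h\|_\eps+\|\vec{\boldsymbol{w}}_h-\vec\bx_h\|_{\eps,h}$ then yields the advertised bound, and taking the infimum over $\vec{\boldsymbol{w}}_h$ produces the quasi-best approximation statement.

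The only nontrivial point I expect to watch is the compatibility between the continuous and discrete norms of the fluid pressure: $\|\cdot\|_r$ involves an infimum over $\sqrt{\kappa}\rH^1_0(\Omega)\cap\rL^2_0(\Omega)$ whereas $\|\cdot\|_{\eps,h}$ takes the infimum over $\rU_h$, so when applying continuity of $\cA_\eps$ to an element of $\bX_{\eps,h}$ one must verify that $\|\cdot\|_\eps\leq\|\cdot\|_{\eps,h}$ on $\bX_{\eps,h}$. This holds because $\rU_h\subset\rH^1_0(\Omega)$, so the continuous infimum is taken over a larger set and is no larger than the discrete one; no reverse estimate is needed for this proof.
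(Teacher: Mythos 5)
Your proposal is correct and follows essentially the same route as the paper: Banach--Ne\v{c}as--Babu\v{s}ka for existence/uniqueness and stability, then Galerkin orthogonality combined with the discrete inf-sup applied to $\vec{\bx}_h-\vec{\boldsymbol{w}}_h$, continuity of $\cA_\epsilon$ on $\bX_\epsilon$, and a triangle inequality. Your explicit remark on the compatibility $\|\cdot\|_\epsilon\leq\|\cdot\|_{\epsilon,h}$ on $\bX_{\epsilon,h}$ (via $\rU_h\subset\rH^1_0(\Omega)$) is a detail the paper leaves implicit, but it does not change the argument.
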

\begin{proof}
The existence and uniqueness of the solution is obtained in a similar way to its counterpart in the continuous level. 
For the corresponding C\'ea estimate, we proceed to denote as $\vec{\boldsymbol{x}}:=(\bu,\bv,\bomega,\varphi,p)$,  $\vec{\boldsymbol{x_h}}:=(\bu_h,\bv_h,\bomega_h,\varphi_h,p_h)$ and $\vec{\boldsymbol{y}_h}:=(\bgamma_h,\bzeta_h,\btheta_h,\psi_h,q_h)$.
From \eqref{BNB:inf-supd}, we can infer that there exists a positive constant $\alpha$ independent of the parameters such that 
\begin{equation*}
  \sup_{\vec{\boldsymbol{z}}_h \in \bX_{\epsilon,h}\setminus\{\vec{\boldsymbol{0}}\}}
  \frac{\langle \mathcal{A}_\epsilon(\vec{\boldsymbol{x}}_h),\vec{\boldsymbol{z}}_h \rangle }
  {\|\vec{\boldsymbol{z}}_h \|_{\epsilon}}
  \geq \alpha  \|\vec{\boldsymbol{x}}_h\|_{\epsilon} 
  \qquad \forall \vec{\boldsymbol{x}}_h \in \bX_{\epsilon,h}.
\end{equation*}
Using the error equation, we readily obtain that $\langle \mathcal{A}_\epsilon(\vec{\boldsymbol{x}}),\vec{\boldsymbol{y}}_h \rangle =\langle \mathcal{A}_\epsilon(\vec{\boldsymbol{x}}_h),\vec{\boldsymbol{y}}_h \rangle $. Furthermore, 
since $\vec{\boldsymbol{y}}_h-\vec{\boldsymbol{x}}_h\in \bX_{\epsilon,h}$  we can deduce that
\begin{align*}
\Vert \vec{\boldsymbol{x}}-\vec{\boldsymbol{x}}_h \Vert_{\epsilon}&\leq \Vert \vec{\boldsymbol{x}}-\vec{\boldsymbol{y}}_h \Vert_{\epsilon}+\Vert \vec{\boldsymbol{y}}_h-\vec{\boldsymbol{x}}_h \Vert_{\epsilon}\\
&\leq \Vert \vec{\boldsymbol{x}}-\vec{\boldsymbol{y}}_h \Vert_{\epsilon}+\alpha^{-1}\sup_{\vec{\boldsymbol{z}}_h \in \bX_{\epsilon,h}\setminus\{\vec{\boldsymbol{0}}\}}
  \frac{\langle \mathcal{A}_\epsilon(\vec{\boldsymbol{y}}_h-\vec{\boldsymbol{x}}_h),\vec{\boldsymbol{z}}_h \rangle }
  {\|\vec{\boldsymbol{z}}_h \|_{\epsilon}}\\
  &\leq \Vert \vec{\boldsymbol{x}}-\vec{\boldsymbol{y}}_h \Vert_{\epsilon}+\alpha^{-1}\sup_{\vec{\boldsymbol{z}}_h \in \bX_{\epsilon,h}\setminus\{\vec{\boldsymbol{0}}\}}
  \frac{\langle \mathcal{A}_\epsilon(\vec{\boldsymbol{y}}_h-\vec{\boldsymbol{x}}),\vec{\boldsymbol{z}}_h \rangle }
  {\|\vec{\boldsymbol{z}}_h \|_{\epsilon}}\\
  & \leq \Vert \vec{\boldsymbol{x}}-\vec{\boldsymbol{y}}_h \Vert_{\epsilon}+\alpha^{-1}\sup_{\vec{\boldsymbol{z}}_h \in \bX_{\epsilon,h}\setminus\{\vec{\boldsymbol{0}}\}}
  \frac{\Vert \mathcal{A}_\epsilon \Vert_{\mathcal{L}(\bX_\epsilon,\bX'_\epsilon)}
  \Vert \vec{\boldsymbol{y}}_h-\vec{\boldsymbol{x}}\Vert_{\epsilon} \Vert \vec{\boldsymbol{z}}_h \Vert_{\epsilon}  }
  {\|\vec{\boldsymbol{z}}_h \|_{\epsilon}}\\
  &\leq (1+\alpha^{-1}\Vert \mathcal{A}_\epsilon \Vert_{\mathcal{L}(\bX_\epsilon,\bX'_\epsilon)}) \Vert \vec{\boldsymbol{x}}-\vec{\boldsymbol{y}}_h \Vert_{\epsilon},
\end{align*}
which finishes the proof. 
\end{proof}

{
Let us recall, from, e.g.,  \cite[Chapters 16, 17 and 22]{ern04}, the following approximation properties of the 
finite element subspaces \eqref{eq:fespaces}, which are obtained using the classical interpolation theory.   
Assume that $(\bu,\bv,\bomega,\varphi,p) \in \bH^{1+s}(\Omega)\times {\bH^{s}(\vdiv,\Omega)\times \bH^{s}(\bcurl,\Omega)} \times {H}^{s}(\Omega) \times {H}^{s}(\Omega)$,  for some
$s\in(1/2,k+1]$. Then there exists $C>0$,
independent of $h$, such that
\begin{subequations}
\begin{align}
\|\beps(\bu-\mathcal{I}_h(\bu)) \|_{0,\Omega} &\leq  C h^{s}| \bu|_{1+s,\Omega},\label{Ap1}\\
\Vert p -\Pi_h (p) \Vert_{0,\Omega}
&\leq C h^{s}|p |_{s,\Omega},\label{Ap2}
\\
\|\bv-\mathcal{I}_h^{RT}(\bv)\|_{0,\Omega}&\leq C h^{s}|\bv|_{s,\Omega} , \label{Ap3}
\\
\|\bomega-\mathcal{I}_h^{N}(\bomega)\|_{0,\Omega} &\leq C h^{s}|\bomega|_{s,\Omega},\label{Ap4}
\end{align}
\end{subequations}
where $\Pi_h:\rL_0^2\rightarrow \rZ_h\subset \rQ_h$ is the $L^2$-projection and $\mathcal{I}_h:\bH^1(\Omega)\rightarrow \bU_h$, $\mathcal{I}_h^{RT}:\bH_0(\vdiv,\Omega)\rightarrow \bV_h$, $\mathcal{I}_h^{N}:\bH_0(\bcurl,\Omega)\rightarrow \bW_h$ are the Lagrange, Raviart--Thomas and N\'edelec interpolators respectively.
}
\begin{theorem}\label{th:cv}
Let $(\bu,\bv,\bomega,\varphi,p)\in \bX_\epsilon$ and $(\bu_h,\bv_h,\bomega_h,\varphi_h,p_h)\in \bX_{\epsilon,h}$ be the unique solutions to the continuous and discrete problems \eqref{eq:weak2} and \eqref{eq:weak-discrete}, respectively. 
Assume that $(\bu,\bv,\bomega,\varphi,p) \in \bH^{1+s}(\Omega)\times {\bH^{s}(\vdiv,\Omega)\times \bH^{s}(\bcurl,\Omega)} \times {H}^{s}(\Omega) \times{H}^{s}(\Omega)$, for some $s \in (1/2,{k+1}]$. Then, there
exists $C > 0$, {independent of the mesh size $h$ and of the model parameters $\epsilon$}, such that
\begin{align*}
\Vert (\bu-\bu_h,\bv-\bv_h,\bomega-\bomega_h,\varphi-\varphi_h,p-p_h)\Vert_{\epsilon}\leq Ch^s\Vert (\bu,\bv,\bomega,\varphi,p)\Vert_{s,\epsilon},
\end{align*}
where 
\begin{align*}
\Vert (\bu,\bv,\bomega,\varphi,p)\Vert_{s,\epsilon}^2 := 
& 2\mu|\bu|_{1+s,\Omega}^2+\frac{1}{\kappa}|\bv|_{s,\Omega}^2+\frac{\nu}{\kappa}|\vdiv \bv|_{s,\Omega}^2+|\bomega|_{s,\Omega}^2+\nu|\bcurl \bomega|_{s,\Omega}^2 +\frac{1}{2\mu}|\varphi|_{s,\Omega}^2+\left(c_0+\frac{\kappa}{\nu} \right)|p|_{s,\Omega}^2+\frac{1}{\lambda}|\varphi+\alpha p|_{s,\Omega}^2.
\end{align*}
\end{theorem}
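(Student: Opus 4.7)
The plan is to apply the C\'ea estimate already established in Theorem~\ref{Cea:estimate} and then bound the best-approximation error in the $\epsilon$-weighted norm \eqref{energynorm} using the interpolation operators introduced in \eqref{Ap1}--\eqref{Ap4}. Concretely, I would take $\vec{\by}_h=(\mathcal{I}_h(\bu),\mathcal{I}_h^{RT}(\bv),\mathcal{I}_h^{N}(\bomega),\Pi_h(\varphi),\Pi_h(p))\in \bX_{\epsilon,h}$ as test tuple in the right-hand side of the C\'ea bound. Since the constant $1+\alpha^{-1}\|\mathcal{A}_\epsilon\|_{\mathcal{L}(\bX_\epsilon,\bX'_\epsilon)}$ is parameter-independent by Theorems~\ref{inf-sup} and~\ref{inf-supd}, it only remains to estimate each constituent of $\|\vec{\bx}-\vec{\by}_h\|_\epsilon$ term by term.

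First, for the displacement, filtration velocity, vorticity, and total-pressure components the strategy is straightforward: I would expand the definition \eqref{energynorm} and apply \eqref{Ap1}--\eqref{Ap4} directly, obtaining contributions like $2\mu\|\beps(\bu-\mathcal{I}_h(\bu))\|_{0,\Omega}^2\lesssim 2\mu h^{2s}|\bu|_{1+s,\Omega}^2$, $\frac{1}{\kappa}\|\bv-\mathcal{I}_h^{RT}(\bv)\|_{0,\Omega}^2+\frac{\nu}{\kappa}\|\vdiv(\bv-\mathcal{I}_h^{RT}(\bv))\|_{0,\Omega}^2\lesssim h^{2s}\bigl(\frac{1}{\kappa}|\bv|_{s,\Omega}^2+\frac{\nu}{\kappa}|\vdiv\bv|_{s,\Omega}^2\bigr)$, and analogous bounds for the $\bcurl$-based and $\frac{1}{2\mu}$-weighted contributions. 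The commuting-diagram property of $\mathcal{I}_h^{RT}$ (namely $\vdiv\mathcal{I}_h^{RT}=\Pi_h\vdiv$) is used where needed so that the $\vdiv$-norm estimate respects the weighting $\nu/\kappa$. The coupled term $\frac{1}{\lambda}\|(\varphi-\Pi_h\varphi)+\alpha(p-\Pi_h p)\|_{0,\Omega}^2$ is controlled by the triangle inequality together with \eqref{Ap2}, producing a contribution of the form $\frac{1}{\lambda}h^{2s}|\varphi+\alpha p|_{s,\Omega}^2$ provided one views $\varphi+\alpha p$ as a single smooth field.

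The main obstacle, and the step that deserves the most care, is the fluid-pressure contribution $\|p-p_h\|_r^2$ whose definition \eqref{def:norm-r} involves an infimum over $s\in\sqrt{\kappa}\rH_0^1(\Omega)\cap \rL_0^2(\Omega)$. Here I would fix a particular admissible choice in the discrete infimum, namely $s_h=\mathcal{I}_h^{\mathrm{L}}(s^\ast)\in \rU_h$ where $s^\ast$ is the minimiser associated with the continuous $\|p\|_r$ norm. Adding and subtracting $s^\ast$ and $\mathcal{I}_h^{\mathrm{L}}(s^\ast)$, and applying standard Lagrange interpolation estimates in $\rH^1$ together with $L^2$-projection bounds for $p-\Pi_h p$, one arrives at an upper bound of the form $h^{2s}\bigl((c_0+\kappa/\nu)|p|_{s,\Omega}^2+\kappa|s^\ast|_{1+s,\Omega}^2\bigr)$. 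The delicate point is to verify that the optimal $s^\ast$ satisfies the regularity needed and that the resulting bound can be absorbed into $(c_0+\kappa/\nu)|p|_{s,\Omega}^2$ (up to a parameter-independent constant), which follows from the definition of the sum-space norm by choosing $s^\ast=0$ as a safe, if not optimal, admissible competitor.

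Combining all contributions, assembling them with the correct $\epsilon$-weights and invoking the definition of $\|\cdot\|_{s,\epsilon}$ yields the stated bound with a constant $C>0$ independent of $h$ and of the physical parameters in $\epsilon$, completing the proof.
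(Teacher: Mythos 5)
Your proposal is correct and follows essentially the same route as the paper: insert the interpolant tuple $(\mathcal{I}_h(\bu),\mathcal{I}_h^{RT}(\bv),\mathcal{I}_h^{N}(\bomega),\Pi_h(\varphi),\Pi_h(p))$ into the C\'ea estimate of Theorem~\ref{Cea:estimate}, bound each weighted component via \eqref{Ap1}--\eqref{Ap4} and the commuting properties of the Raviart--Thomas and N\'ed\'elec interpolants, and control the sum-space contribution by the crude competitor $s=0$, i.e.\ $\Vert q\Vert_r^2\leq\bigl(\tfrac{\kappa}{\nu}+c_0\bigr)\Vert q\Vert_{0,\Omega}^2$. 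Your detour through the continuous minimiser $s^{\ast}$ is unnecessary, but you correctly fall back on the $s^{\ast}=0$ choice, which is exactly the paper's final step.
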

\begin{proof}
This result follows immediately after choosing the tuple 
\[
(\bgamma_h,\bzeta_h,\btheta_h,\psi_h,q_h):=(\mathcal{I}_h(\bu),\mathcal{I}_{h}^{RT}(\bv),\mathcal{I}_h^{N}(\bomega),\Pi_h(\varphi),\Pi_h(p)),
\]
in Theorem \ref{Cea:estimate}, and then using the estimates \eqref{Ap1}-\eqref{Ap4} together with the following commuting properties of the Raviart--Thomas and N\'ed\'elec operators 
\[
\vdiv \mathcal{I}_{h}^{RT}(\bv)=\Pi_h(\vdiv \bv), \qquad \bcurl \mathcal{I}_{h}^{N}(\bomega)=\Pi_h(\bcurl \bomega).
\]
Finally, it suffices to invoke the fact that $\Vert q \Vert_r\leq \left(\frac{\kappa}{\nu}+c_0 \right)\Vert q \Vert_{0,\Omega}$.
\end{proof}

\begin{remark}\label{rem:41}
{The error estimate above holds true also in the limit cases of near incompressibility ($\lambda \to \infty$) and near impermeability ($\kappa \to 0$). This robustness in these cases (along with variations in other parameters) is explored in Section~\ref{sec:cv-rob}. 
We also stress that in the non-viscous regime ($\nu = 0$) we do not have a velocity Laplacian in the fluid momentum equation \eqref{eq:biot-brinkman1}, and no vorticity is required. In that case the formulation in \eqref{eq:weak2} recovers the 4-field Biot formulation from \cite{boon21}. Optimal convergence for the Biot limit is confirmed  numerically in Section~\ref{sec:accuracy}, below.}
  \end{remark}

%*****************************************
\section{Numerical verification}\label{sec:results}
%*****************************************
The aim of this section is to experimentally validate the theoretical results presented in Section~\ref{sec:FE}. 
We certify by error convergence verification the proposed finite element method, and then we apply the formulation in the simulation of a representative viscous flow in a poroelastic channel. We also evaluate the robustness of the preconditioners in \eqref{eq:preconditioners}. The numerical implementation uses the open-source finite element framework \texttt{Gridap} \cite{badia20}, and is available in the public domain \cite{caraballo23}. For the solution of the linear systems in the accuracy verification tests we employ the sparse direct method MUMPS, while in the preconditioner tests we use the preconditioner MINRES iterative solver; see the corresponding section below for the full details underlying the set up of the preconditioner tests. 

\begin{table*}[!t]
\centering
\caption{Accuracy test in 2D. Error history (errors for each field variable on a sequence of successively refined grids) using non-weighted spaces, convergence rates, and $\ell^\infty$-norm of the loss of mass (the residual of the mass conservation equation at the discrete level, eqn. \eqref{loss}). \label{table01}}
{\begin{tabular*}{\textwidth}{@{\extracolsep\fill}rcccccccccccc@{\extracolsep\fill}}
\toprule
DoF  & $h$ &  $e_{1}(\bu)\!$  &  \texttt{r}  &  $e_{\vdiv}(\bv)\!$  &  \texttt{r}     &  $e_{\bcurl}(\bomega)$  
&   \texttt{r}  &  $e_0(\varphi)$  &  \texttt{r} &  $e_0(p)$  &  \texttt{r} & $\mathrm{loss}_h$ \\  
\midrule
\multicolumn{13}{c}{Discretisation with $k=0$}\\
\midrule
    93 & 0.7071 & 2.36e+00 & $\star$ & 1.98e+00 & $\star$ & 8.43e+00 & $\star$ & 3.10e+00 & $\star$ & 4.77e-01 &$\star$ & 2.66e-15\\
   309 & 0.3536 & 1.04e+00 & 1.18 & 1.36e+00 & 0.53 & 6.74e+00 & 0.32 & 1.75e+00 & 0.83 & 2.59e-01 & 0.88 & 8.88e-15\\
  1125 & 0.1768 & 4.68e-01 & 1.16 & 6.94e-01 & 0.98 & 3.47e+00 & 0.96 & 8.98e-01 & 0.96 & 9.13e-02 & 1.50 & 1.66e-14\\
  4293 & 0.0884 & 2.27e-01 & 1.04 & 3.49e-01 & 0.99 & 1.74e+00 & 0.99 & 4.52e-01 & 0.99 & 3.87e-02 & 1.24 & 3.68e-14\\
 16773 & 0.0442 & 1.13e-01 & 1.01 & 1.74e-01 & 1.00 & 8.73e-01 & 1.00 & 2.26e-01 & 1.00 & 1.84e-02 & 1.08 & 1.20e-13\\
 66309 & 0.0221 & 5.65e-02 & 1.00 & 8.73e-02 & 1.00 & 4.37e-01 & 1.00 & 1.13e-01 & 1.00 & 9.05e-03 & 1.02 & 1.99e-13\\
\midrule
\multicolumn{13}{c}{Discretisation with $k=1$}\\
\midrule
   221 & 0.7071 & 8.54e-01 & $\star$  & 1.27e+00 & $\star$ & 5.40e+00 & $\star$ & 1.26e+00 & $\star$ & 2.04e-01 & $\star$ & 8.95e-15\\
   789 & 0.3536 & 1.90e-01 & 2.17 & 3.01e-01 & 2.07 & 1.47e+00 & 1.87 & 3.92e-01 & 1.69 & 3.72e-02 & 2.46 & 2.30e-14\\
  2981 & 0.1768 & 4.80e-02 & 1.99 & 7.75e-02 & 1.96 & 3.86e-01 & 1.93 & 1.01e-01 & 1.96 & 7.69e-03 & 2.27 & 5.56e-14\\
 11589 & 0.0884 & 1.20e-02 & 2.00 & 1.95e-02 & 1.99 & 9.84e-02 & 1.97 & 2.54e-02 & 1.99 & 1.82e-03 & 2.08 & 1.15e-13\\
 45701 & 0.0442 & 3.00e-03 & 2.00 & 4.89e-03 & 2.00 & 2.48e-02 & 1.99 & 6.36e-03 & 2.00 & 4.50e-04 & 2.02 & 2.40e-13\\
181509 & 0.0221 & 7.50e-04 & 2.00 & 1.22e-03 & 2.00 & 6.23e-03 & 1.99 & 1.59e-03 & 2.00 & 1.12e-04 & 2.00 & 5.00e-13\\
\bottomrule
\end{tabular*}}
\begin{tablenotes}
  The symbol $\star$ denotes that no experimental convergence rate has been computed at the first refinement level.
 \end{tablenotes} 
\end{table*}

\subsection{Accuracy tests}\label{sec:accuracy} Let us consider the unit square domain $\Omega = (0,1)^2$ together with the manufactured solutions  
\begin{gather*}
   \bu(x,y) = \begin{pmatrix} \sin(\pi [x+y])\\\cos(\pi[x^2+y^2])\end{pmatrix}, \qquad p(x,y) = \sin(\pi x+y)\sin(\pi y),\\ 
\bv(x,y) = \begin{pmatrix}
\sin(\pi x)\sin(\pi y)\\
\cos(\pi x)\cos(2\pi y)  \end{pmatrix}, \qquad \omega(x,y) =  {\sqrt{\frac{\nu}{\kappa}}}\mathrm{curl}\, \bv, \qquad 
\varphi(x,y) = -\lambda\vdiv\bu + \alpha p.
\end{gather*}
The model parameters assume the arbitrary values $\nu = 1$, $\lambda = 1$, $\mu = 1$, $\kappa = 1$, $c_0=1$, $\alpha = 1$; 
and the loading and source terms, together with essential boundary conditions are computed from the manufactured solutions above. The mean value of fluid and total pressure is prescribed to coincide with the mean values of the manufactured pressures, which is implemented by means of a real Lagrange multiplier. Sequences of successively refined uniform tetrahedral meshes are used to compute approximate solutions and to generate the error history (error decay with the mesh size $h$ and experimental convergence rates, using  norms in non-weighted spaces for each individual unknown and at each refinement level). We display the error history in Table~\ref{table01}, where the method confirms an asymptotic optimal convergence of $O(h^{k+1})$ for each variable and for both polynomial degrees. We remark that for this test we use continuous and piecewise polynomials of degree $k+2$ for displacement and discontinuous piecewise polynomials of degree $k$ for total pressure. Note that in 2D, the filtration vorticity is a scalar field  $\omega = \sqrt{\frac{\nu}{\kappa}} \mathrm{curl}\,\bv$ and the appropriate functional space is {$\rH^1_0(\Omega)$}. In the discrete setting we then select
\[\mathrm{W}_h : = \{\omega_h \in  {\rH^1_0(\Omega)}: \omega_h|_K \in \mathrm{P}_1(K), \ \forall K\in \cT_h\}.\]
Moreover, to strengthen the numerical evidence, in the last column of the table we report the ($L^2$-projection onto $\rZ_h$ of the) loss of mass
\begin{equation}\label{loss}
  \mathrm{loss}_h : = -(c_0+\frac{\alpha^2}{\lambda})p_h+\frac{\alpha}{\lambda}\varphi_h-\vdiv(\bv_h)-g,
\end{equation}  
for each refinement level, confirming a satisfaction of the mass conservation equation on the order of machine accuracy. This test has been performed using pure Dirichlet boundary conditions.% but additional tests (not shown here) indicate that optimal convergence rates are attained also for the case of mixed boundary conditions. 

{In regards to Remark~\ref{rem:41}, we perform exactly the same error history as reported in Table~\ref{table01}, now in the non-viscous regime (setting $\nu=0$). Of course, then vorticity is zero and optimal convergence is attained for all other fields also in this case, as shown in Table~\ref{table01-no-nu}.}

\begin{table*}[!t]
\centering
\caption{{Accuracy test in 2D. Error history for the non-viscous regime with $\nu=0$ (errors for each field variable except vorticity on a sequence of successively refined grids) using non-weighted spaces, convergence rates, and $\ell^\infty$-norm of the loss of mass (the residual of the mass conservation equation at the discrete level, eqn. \eqref{loss}).} \label{table01-no-nu}}
{\begin{tabular*}{\textwidth}{@{\extracolsep\fill}rcccccccccc@{\extracolsep\fill}}
\toprule
DoF  & $h$ &  $e_{1}(\bu)\!$  &  \texttt{r}  &  $e_{\vdiv}(\bv)\!$  &  \texttt{r}     &  $e_0(\varphi)$  &  \texttt{r} &  $e_0(p)$  &  \texttt{r} & $\mathrm{loss}_h$ \\  
\midrule
\multicolumn{11}{c}{Discretisation with $k=0$}\\
\midrule
    93 & 0.7071 & 2.36e+00 & $\star$ & 1.93e+00  & $\star$ & 3.08e+00 & $\star$ & 2.61e-01 & $\star$ & 2.61e-15\\
   309 & 0.3536 & 1.04e+00 & 1.18 & 1.34e+00  & 0.52 & 1.74e+00 & 0.83 & 1.40e-01 & 0.90 & 1.04e-14\\
  1'125 & 0.1768 & 4.68e-01 & 1.15 & 6.91e-01  & 0.96 & 8.97e-01 & 0.95 & 7.15e-02 & 0.97 & 8.17e-14\\
  4'293 & 0.0884 & 2.27e-01 & 1.04 & 3.48e-01  & 0.99 & 4.52e-01 & 0.99 & 3.59e-02 & 0.99 & 3.37e-13\\
 16'773 & 0.0442 & 1.13e-01 & 1.01 & 1.74e-01  & 1.00 & 2.26e-01 & 1.00 & 1.80e-02 & 1.00 & 3.64e-12\\
 66'309 & 0.0221 & 5.65e-02 & 1.00 & 8.73e-02  & 1.00 & 1.13e-01 & 1.00 & 9.00e-03 & 1.00 & 2.43e-11\\
 \midrule
\multicolumn{11}{c}{Discretisation with $k=1$}\\
\midrule
      221 & 0.7071 & 8.54e-01 & $\star$ & 1.26e+00  & $\star$ & 1.25e+00 & $\star$ & 9.43e-02 & $\star$ & 7.75e-15\\
   789 & 0.3536 & 1.90e-01 & 2.17 & 3.01e-01  & 2.06 & 3.90e-01 & 1.68 & 2.54e-02 & 1.89 & 2.37e-14\\
  2'981 & 0.1768 & 4.81e-02 & 1.99 & 7.76e-02  & 1.95 & 1.01e-01 & 1.96 & 6.48e-03 & 1.97 & 5.83e-14\\
 11'589 & 0.0884 & 1.20e-02 & 2.00 & 1.96e-02  & 1.99 & 2.53e-02 & 1.99 & 1.63e-03 & 1.99 & 1.25e-13\\
 45'701 & 0.0442 & 3.00e-03 & 2.00 & 4.91e-03  & 2.00 & 6.35e-03 & 2.00 & 4.07e-04 & 2.00 & 4.19e-13\\
181'509 & 0.0221 & 7.51e-04 & 2.00 & 1.23e-03  & 2.00 & 1.59e-03 & 2.00 & 1.02e-04 & 2.00 & 7.12e-12\\
\bottomrule
\end{tabular*}}
\begin{tablenotes}
  The symbol $\star$ denotes that no experimental convergence rate has been computed at the first refinement level.
 \end{tablenotes} 
\end{table*}

\subsection{{Robustness of the convergence rates}}\label{sec:cv-rob}

\begin{table*}[!t]
  \centering
  \caption{{Accuracy test in 3D. Error history (total error in the weighted norm
           that leads to the definition of  $\cB_3$ on a sequence of successively refined grids).} \label{table:02}}
{\begin{tabular*}{\textwidth}{@{\extracolsep\fill}rccc@{\extracolsep\fill}}
\toprule
DoF  & $h$ &  $e((\bu,\bv,\bomega,\varphi,p))$  &  \texttt{r} \\  
\midrule
\multicolumn{4}{c}{Discretisation with $k=0$}\\
\midrule
4'675 & 0.6124 & 1.02e+01 & $\star$ \\ 
19'939 & 0.3062 & 3.87e+00 & 1.402 \\ 
11'0083 & 0.1531 & 1.18e+00 & 1.718 \\ 
718'339 & 0.0765 & 3.24e-01 & 1.862 \\ 
5'162'755 & 0.0383 & 8.51e-02 & 1.928 \\ 
\midrule
\multicolumn{4}{c}{Discretisation with $k=1$}\\
\midrule
12'766 & 0.6124 & 3.03e+00 & $\star$ \\
  55'514 & 0.3062 & 7.32e-01 & 2.048 \\
 310'450 & 0.1531 & 1.34e-01 & 2.454 \\
2'041'058 & 0.0765 & 2.03e-02 & 2.715 \\
\bottomrule
\end{tabular*}}
\begin{tablenotes}
  The symbol $\star$ denotes that no experimental convergence rate has been computed at the first refinement level.
\end{tablenotes}
\end{table*}

{Next we} conduct a second test of convergence, now in the unit cube domain $\Omega = (0,1)^3$ and with mixed boundary conditions (displacement, normal velocity and tangential vorticity are prescribed on the sides $x=0$, $y=0$, $z=0$ and known normal stress, flux, and pressures are imposed on the remainder of the boundary) considering closed-form solutions to the vorticity-based Biot--Brinkman equations as follows 
\begin{gather*}
   \bu(x,y,z) =\frac{1}{10} \begin{pmatrix} \sin(\pi [x+y+z])\\\cos(\pi[x^2+y^2+z^2]) \\ \sin(\pi [x+y+z])\cos(\pi [x+y+z]) \end{pmatrix}, \qquad p(x,y,z) = \sin(\pi x)\cos(\pi y)\sin(\pi z),\\ 
\bv(x,y,z) = \begin{pmatrix}
\sin^2(\pi x)\sin(\pi y)\sin(2\pi z)\\
\sin(\pi x)\sin^2(\pi y)\sin(2\pi z)\\ 
-[\sin(2\pi x)\sin(\pi y)+\sin(\pi x)\sin(2 \pi y)]\sin^2(\pi z)  
\end{pmatrix}, \qquad \bomega(x,y,z) = {\sqrt{\frac{\nu}{\kappa}}}\bcurl \bv, \qquad 
\varphi(x,y,z) = -\lambda\vdiv\bu + \alpha p.
\end{gather*}
The 3D example uses generalised Taylor--Hood elements for the displacement/total pressure pair, {but differently from \eqref{eq:fespaces}, we take one polynomial degree higher for the approximation of velocity, vorticity and fluid pressure. This to maintain an overall $O(h^{k+2})$ convergence}. This time we consider the parameter values 
 $\nu = 0.1$, $\lambda = 100$, $\mu = 10$, $\kappa = 10^{-3}$, $c_0=0.1$, $\alpha = 0.1$, 
 and in Table~\ref{table:02} we only show the error decay measured in the total weighted norm 
 {that leads to the definition of  $\cB_3$, that is $\|\vec{\bx}-\vec{\bx_h}\|_\eps = \sqrt{(\vec{\bx}-\vec{\bx_h})\cdot\cB_3^{-1}(\vec{\bx}-\vec{\bx_h})}$}. This  shows an asymptotic $h^{k+2}$ order of convergence.  
For sake of illustration, we depict in Figure \ref{fig:ex1} the approximate solutions obtained after 4 levels of uniform refinement.

\begin{figure}[t]
  \begin{center}
    \includegraphics[width=0.32\textwidth]{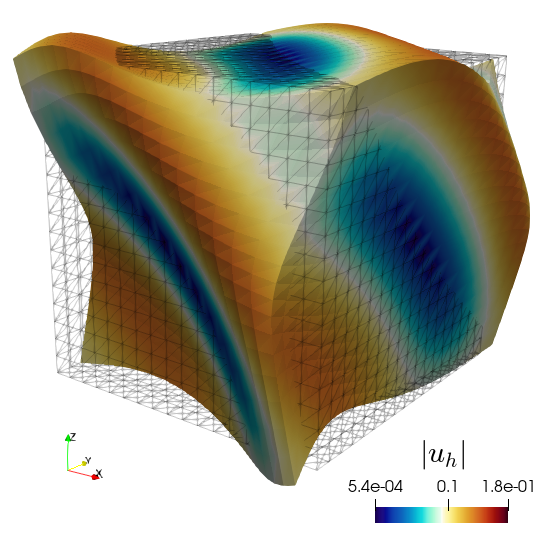}
    \includegraphics[width=0.32\textwidth]{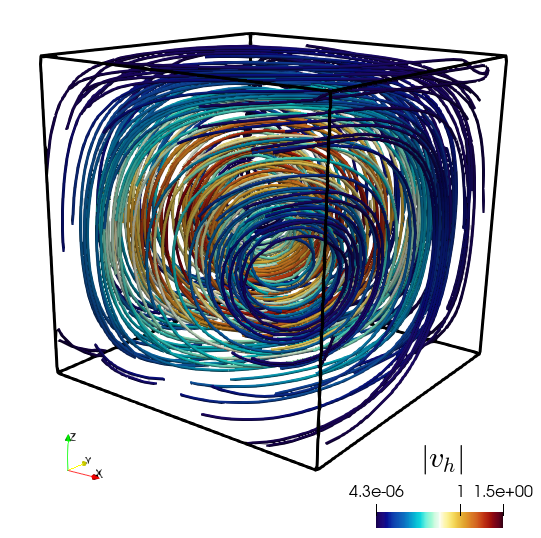}
    \includegraphics[width=0.32\textwidth]{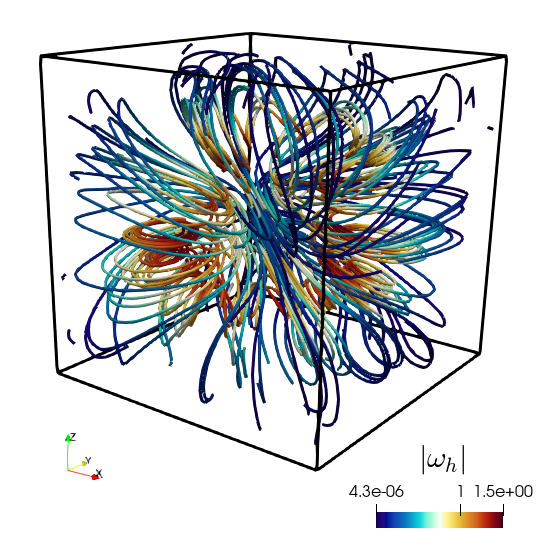}\\
    \includegraphics[width=0.32\textwidth]{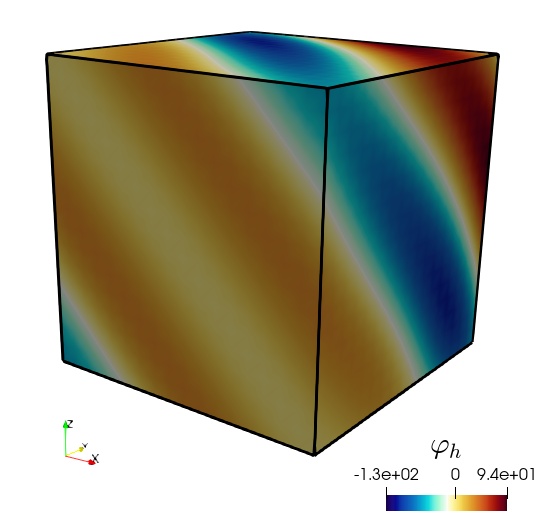}
    \includegraphics[width=0.32\textwidth]{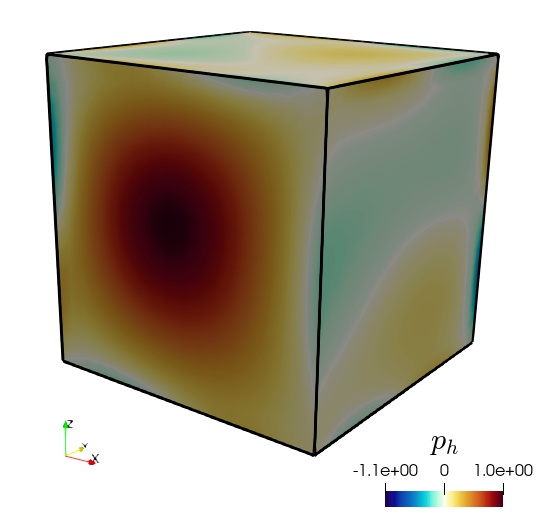}
  \end{center}
  \caption{{Accuracy test in 3D.} Approximate solutions of the Biot--Brinkman equations on a relatively coarse mesh. Displacements on the deformed configuration, velocity streamlines, vorticity vectors, total and fluid pressures computed with the lowest-order method.}\label{fig:ex1}
  \end{figure}

  {We also explore the dependence of the order of convergence upon variation of the base-line parameter values
  used in Table~\ref{table:02}. We consider several sample values of model parameters $\mu \in [1,10^8]$, $\lambda\in [1,10^8]$, $\nu\in [10^{-6}, 1]$, $\kappa\in[10^{-8}, 1]$, $\alpha\in[0,1]$, and $c_0\in [10^{-8},1]$. These ranges are encountered in typical applications of poromechanics of subsurface flows and of linear Biot consolidation of soft tissues \cite{boon21,chen20,lee19,mikelic15,ruiz22}. For the sake of brevity, however, we only report a subset of representative results we obtained with $\mu=1$, $\alpha=1$,  $\lambda=\{1,10^8\}$, $\nu=\{10^{-8}, 1\}$, $\kappa=\{10^{-8}, 1\}$, and $c_0=\{10^{-8},1\}$. In any case, the software in \cite{caraballo23} is written such that the reader might also run additional tests with other parameter values as per-required. We consider the unit cube domain discretised into uniform tetrahedral meshes. In particular, we consider four mesh resolutions, from one up to four levels of uniform refinement of the unit cube discretised with two tetrahedra. We use mixed boundary conditions, with $\Gamma$ being conformed by the faces $x=0$, $y=0$, $z=0$ and $\Sigma$ the remainder of the boundary.}
  
  {The results are shown in Figure \ref{fig:3d_convergence} for $k=0$. Overall, the different convergence curves confirm $O(h^{k+2})$ convergence in the weighted norm
  that leads to the definition of $\cB_3$  regardless of the parameter values, as predicted by the theoretical analysis.}

\begin{figure}[t!]
  \begin{center}
  \includegraphics[width=0.48\textwidth]{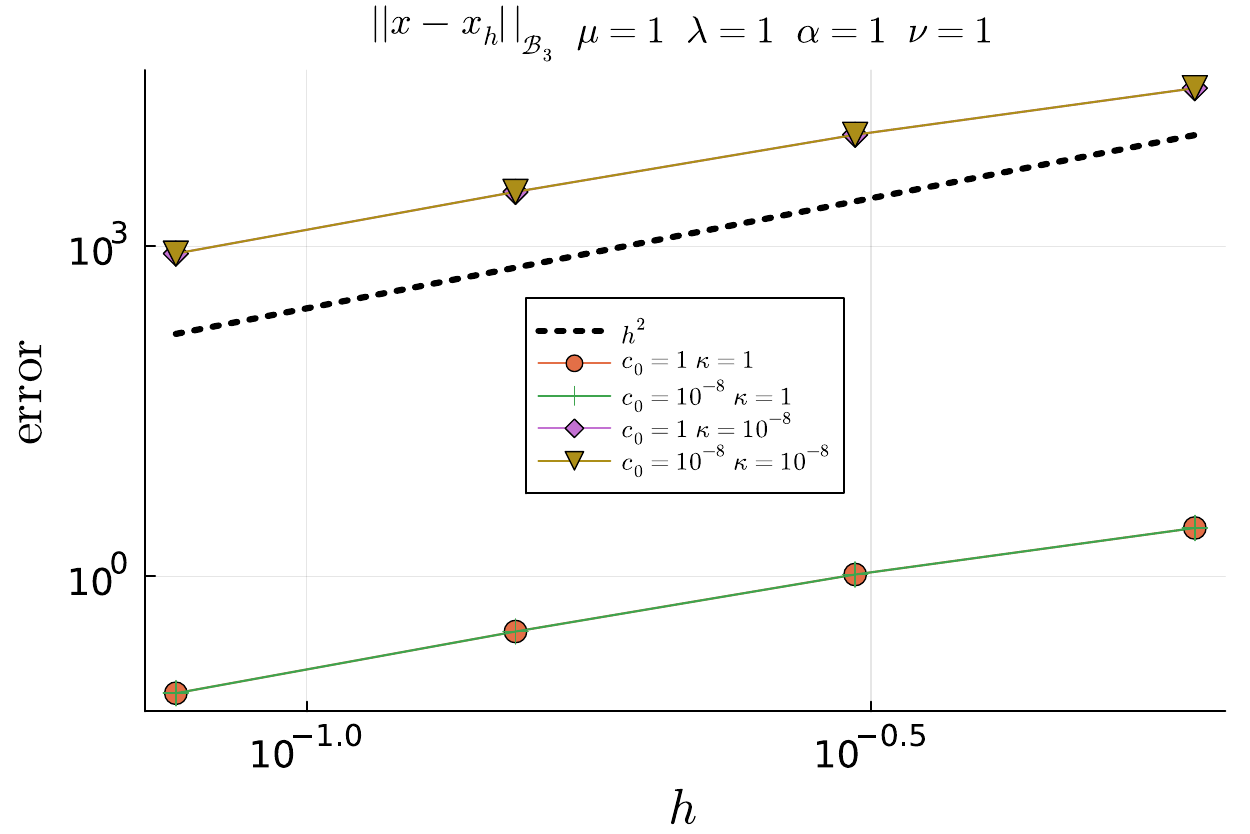}
  \includegraphics[width=0.48\textwidth]{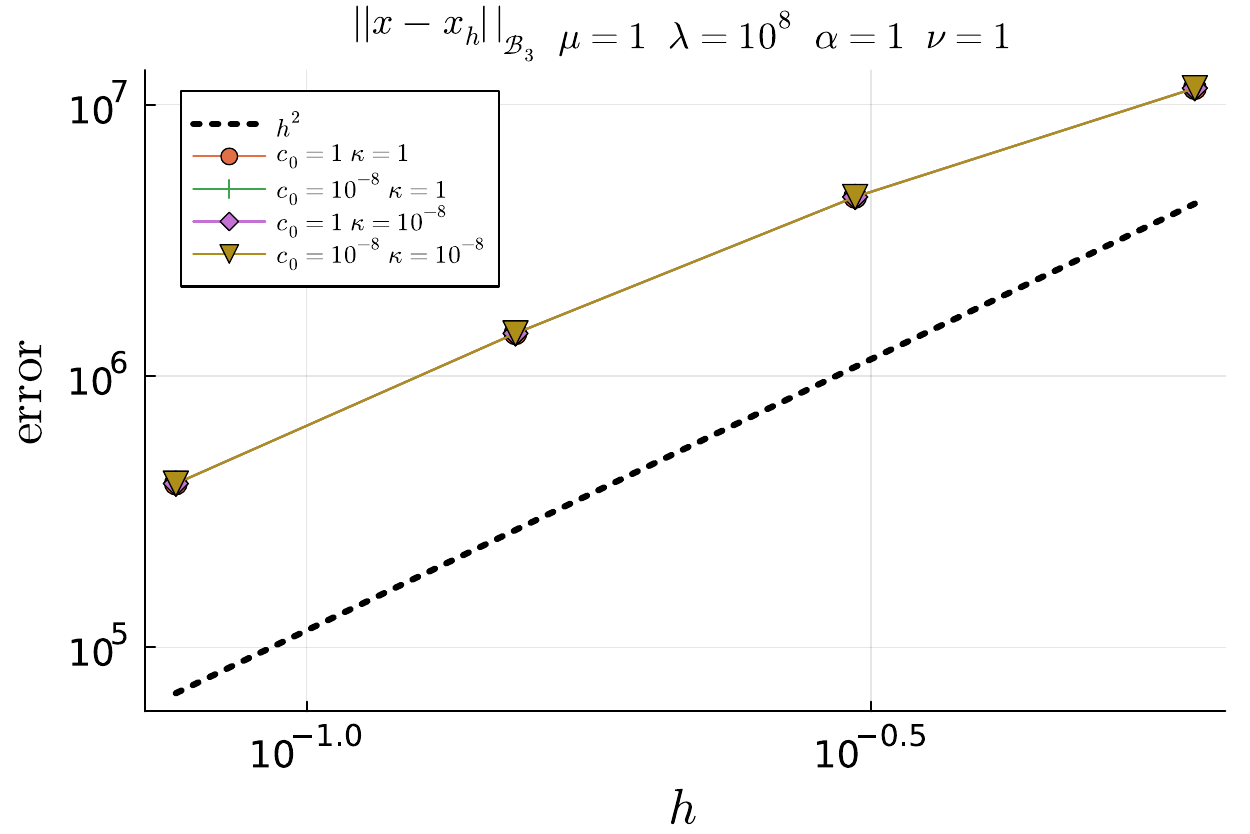} \\
  \includegraphics[width=0.48\textwidth]{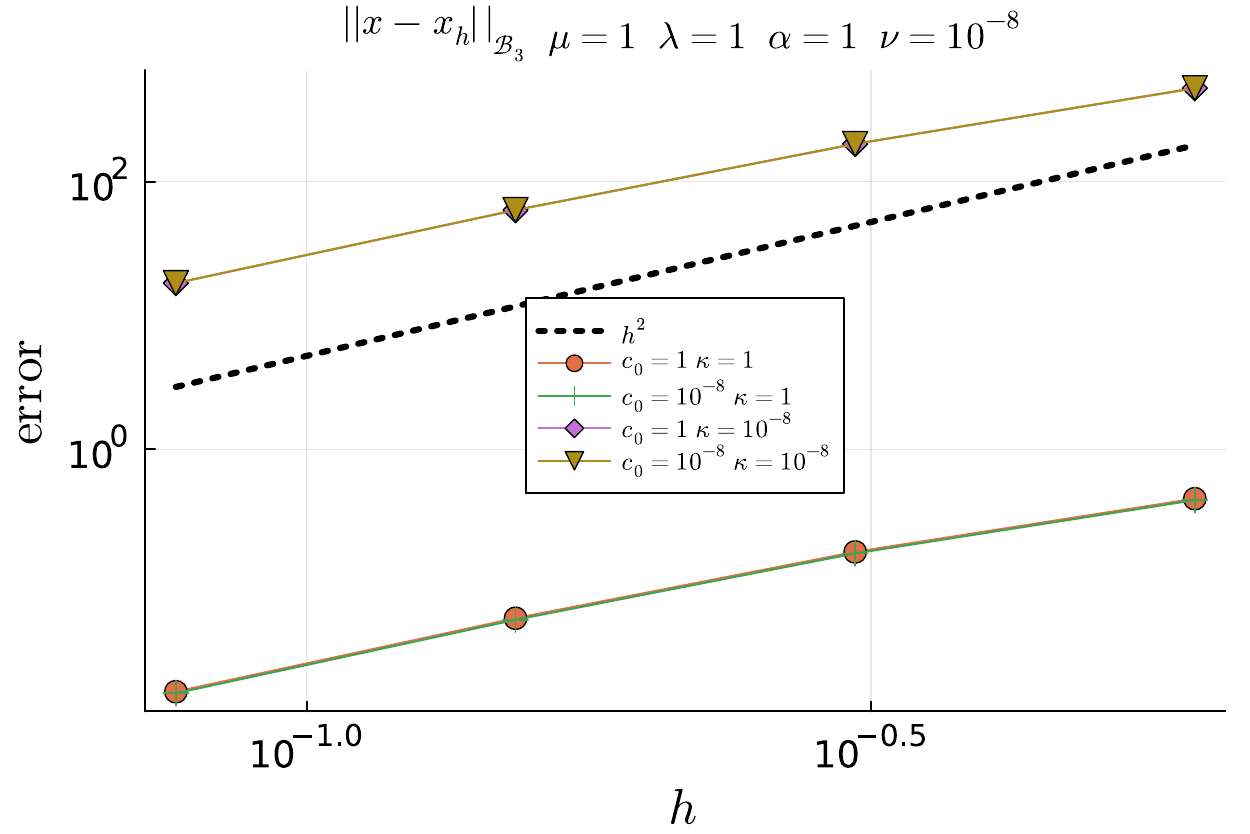}
  \includegraphics[width=0.48\textwidth]{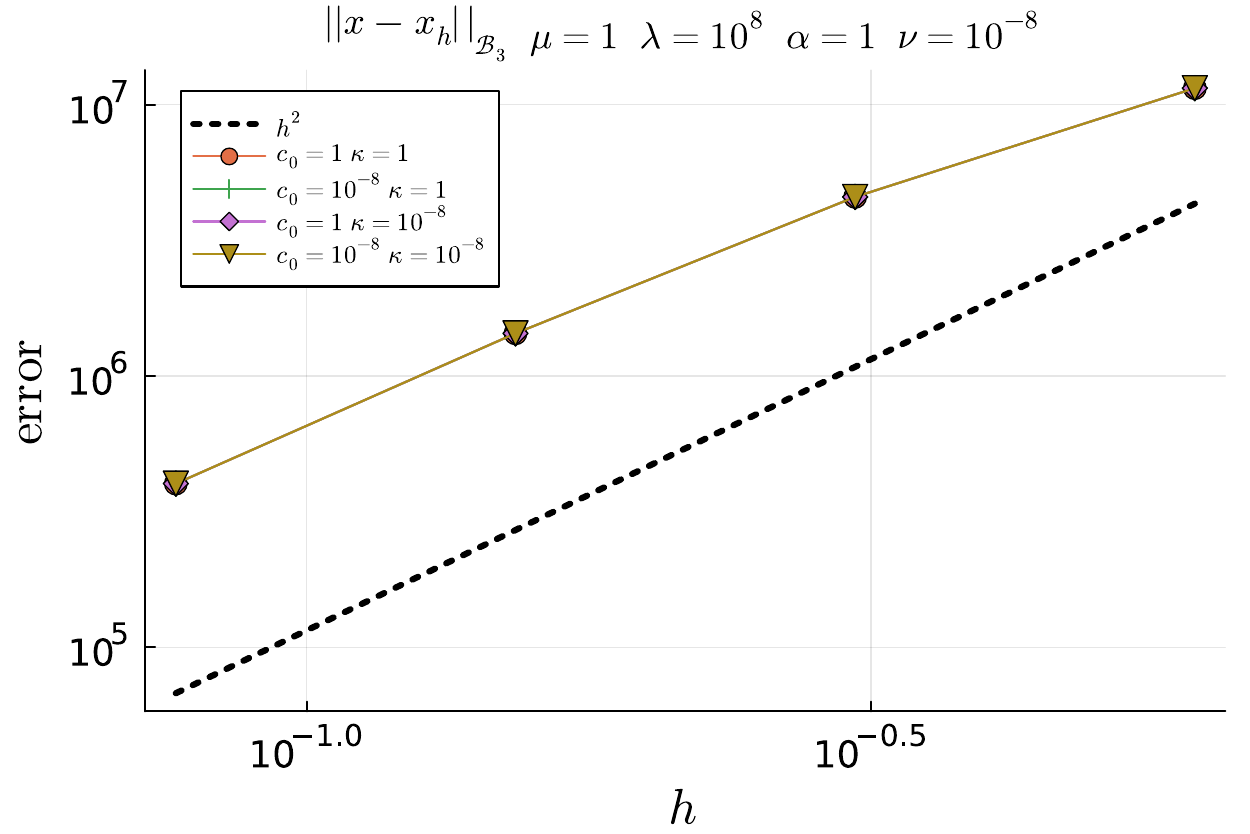}
  \end{center}
  \caption{{Error convergence curves in 3D for $k=0$ and 
                  different combinations of the physical parameters values. 
                  The errors are measured in the weighted norm that leads to the definition of  $\cB_3$.}
                  \label{fig:3d_convergence}}
\end{figure}

%%%%%%%%%%%%%%%%%%%%%%%%%%%%%%%%%%%%%%%%%%%%%
\subsection{{Preconditioner} robustness with respect to model parameters}
Finally, we proceed to study the robustness of $\cB_j^{-1}$ in \eqref{eq:preconditioners} with respect to varying model parameters and increasing mesh resolution. {We use the same combination of parameter values as in the previous section, namely $\mu = 1$, $\alpha = 1$,  $\lambda=\{1,10^8\}$, $\nu=\{10^{-8}, 1\}$, $\kappa=\{10^{-8}, 1\}$, and $c_0=\{10^{-8},1\}$, and five levels of uniform refinement of the unit cube discretised with two tetrahedra. We used the same boundary conditions as in the previous section. We used the definition of FE spaces in \eqref{eq:fespaces}
for $k=0$.} The action of the different inverses arising in the diagonal blocks of $\cB_j^{-1}$ in \eqref{eq:preconditioners}
is implemented with the UMFPACK direct solver. For each combination of parameter values and mesh resolution, we used these preconditioners to accelerate the convergence of the MINRES iterative solver. Convergence is claimed whenever the Euclidean norm of the (unpreconditioned) residual of the whole system is reduced by a factor of $10^{6}$, and stopped otherwise if the number of iterations reaches an upper bound of 500 iterations. 
%The action of $I_0$ is implemented with a real Lagrange multiplier approach imposing the mean value of both pressures. \AM{Is $I_0$ defined above? Is this last comment still true? Aren't we using mixed boundary conditions? I might be missing something here ...}
The (discrete) Laplacian operator required for $\cB_2,\cB_3$ acts on discontinuous pore pressure approximations so we use (for a given piecewise-defined field $\eta$) the following form (see \cite{baerland20}) 
\begin{equation}\label{eq:discr-lap} \bigl(-\eta \Delta_h p_h,q_h\bigr)_{0,\Omega} = \sum_{K\in \cT_h} (\eta \nabla_h p_h,\nabla_hp_h)_{0,K} {+ \sum_{e\in\cE^{\mathrm{int}}_h} \bigl\langle \frac{\avg{\eta}}{\avg{h_e}}\jump{p_h},\jump{q_h} \bigr\rangle_{e}} +  \sum_{e\in\cE^{\Gamma}_h} \langle \frac{\eta}{h_e} {p_h},{q_h} \rangle_{e},\end{equation}
while for the lowest-order case we only keep the second and third terms on the right-hand side of \eqref{eq:discr-lap}.

In {Figures~\ref{fig:robustness-a}-\ref{fig:robustness-b}} we show the number of preconditioned MINRES iterations versus number of DoFs for the $\cB_1$, {$\cB_2$}, and $\cB_3$ preconditioners with the particular parameter value combinations mentioned above.
Each of the plots in these figures 
contain 4 curves corresponding each to one of the four combinations of $\kappa=\{10^{-8}, 1\}$, and $c_0=\{10^{-8},1\}$. To facilitate the comparison among $\cB_1$, {$\cB_2$}, and $\cB_3$, the {6} plots in each figure are grouped into two groups of {three} horizontally adjacent plots each. Each group correspond  to a combination of $\lambda=\{1,10^8\}$, $\nu=\{10^{-8}, 1\}$; the particular combination corresponding to a group is indicated in the title of the {three} plots in the group. 

\begin{figure}[t!]
  \begin{center}
  \includegraphics[width=0.75\textwidth]{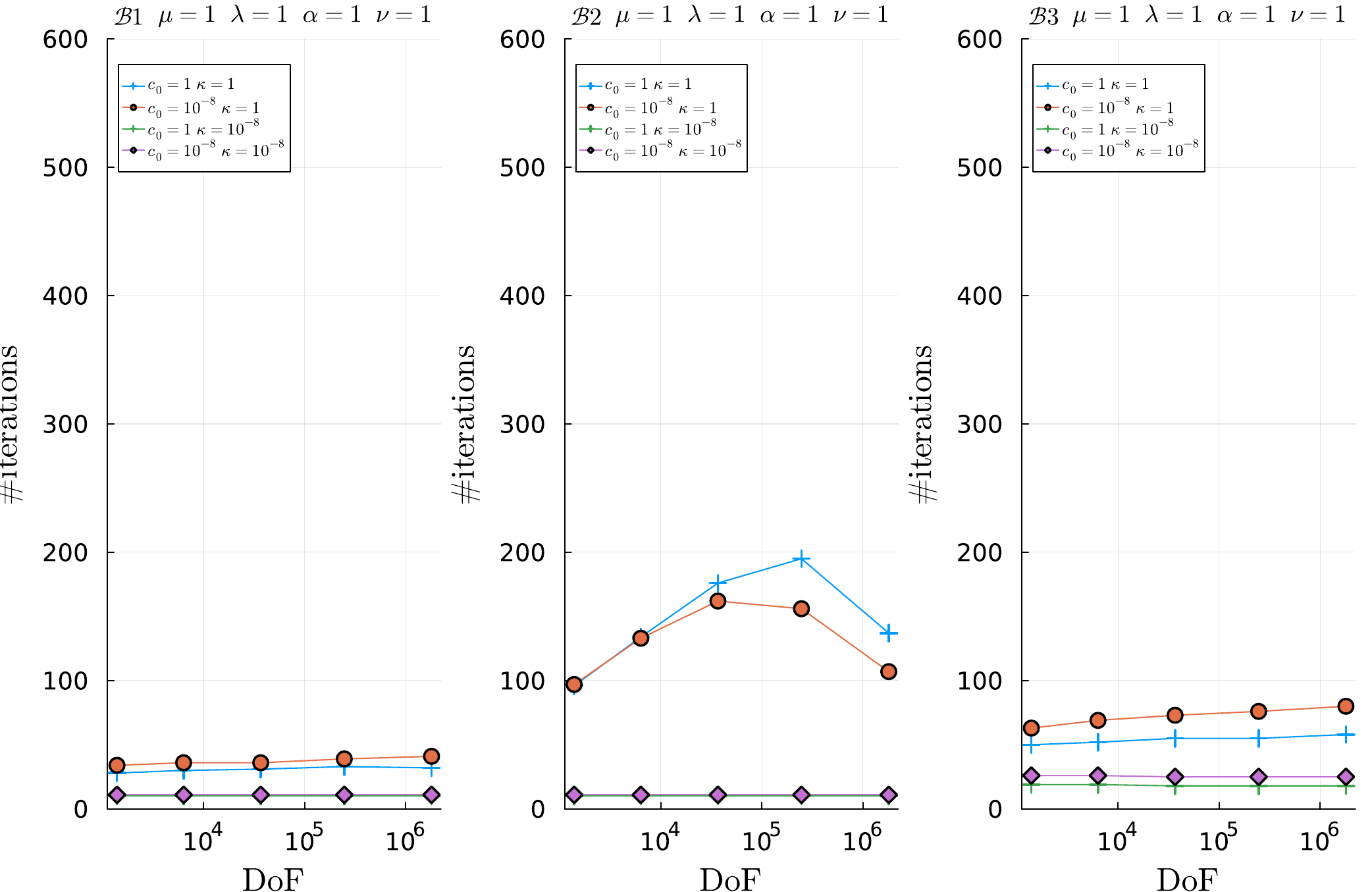}
  \includegraphics[width=0.75\textwidth]{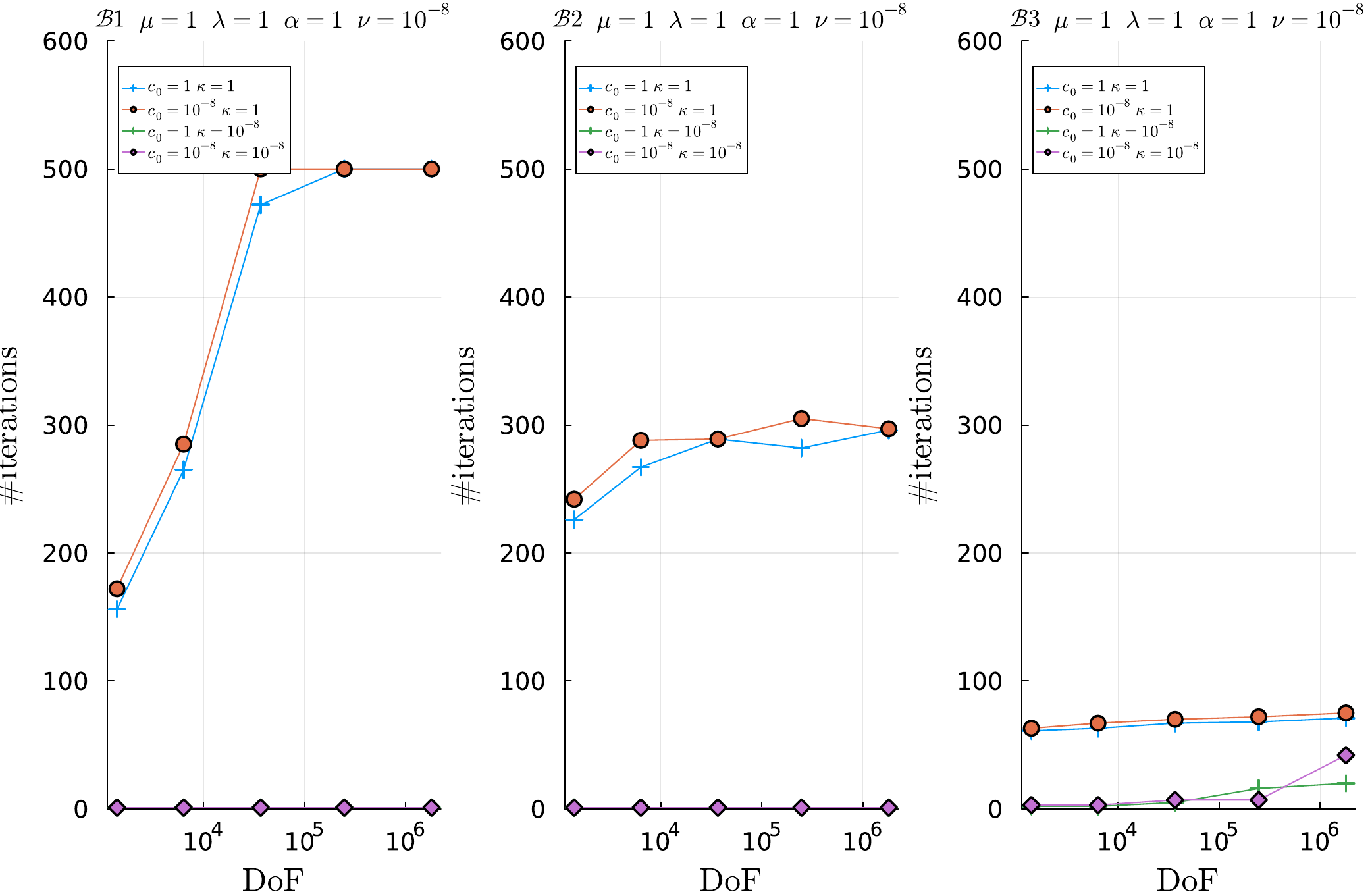}
  \end{center}
  \caption{{Comparison of parameter robustness for preconditioners $\cB_1$, $\cB_2$, and $\cB_3$.}}\label{fig:robustness-a}
  \end{figure}
  
  \begin{figure}[t!]
  \begin{center}
  \includegraphics[width=0.75\textwidth]{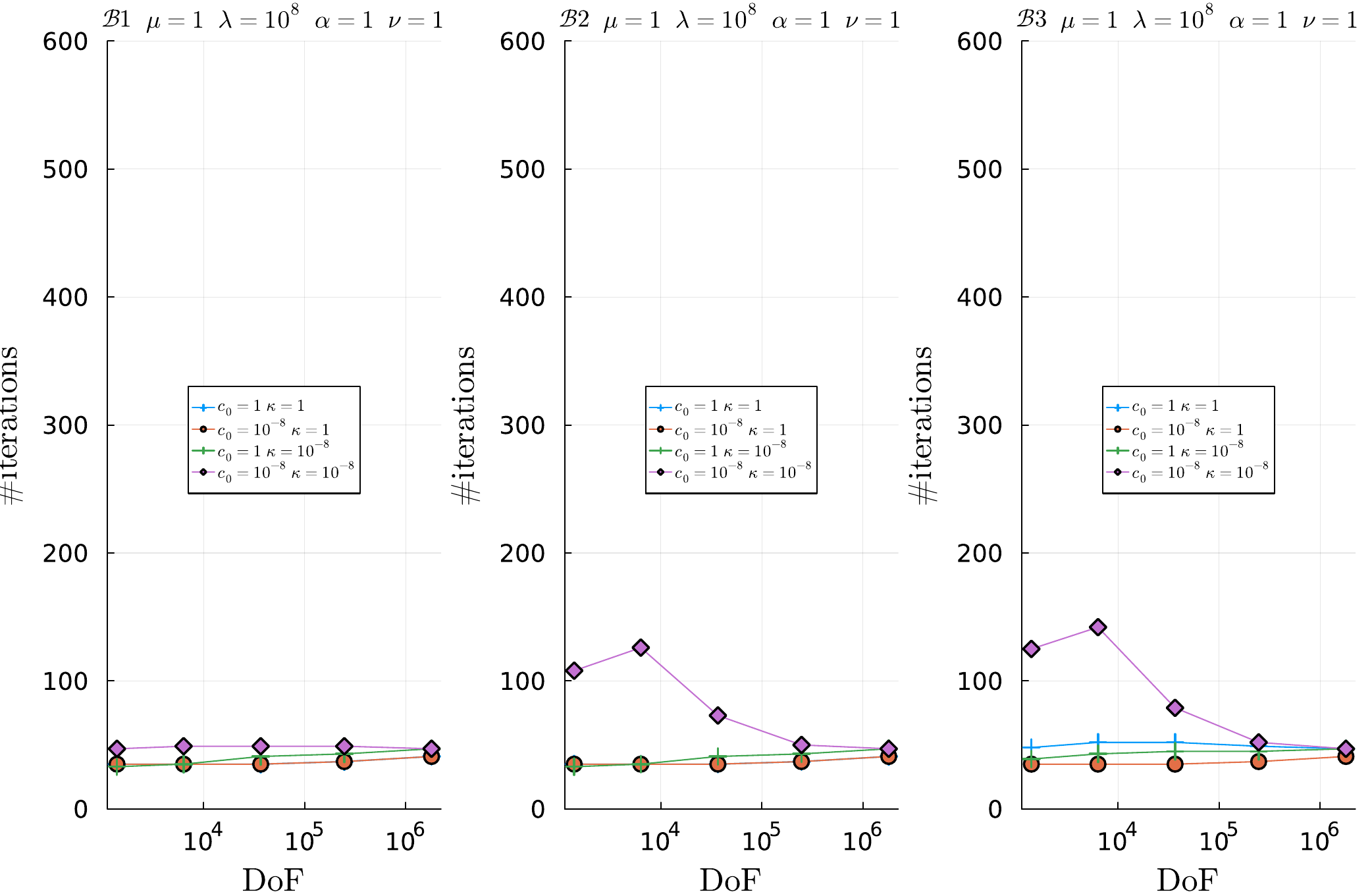}
  \includegraphics[width=0.75\textwidth]{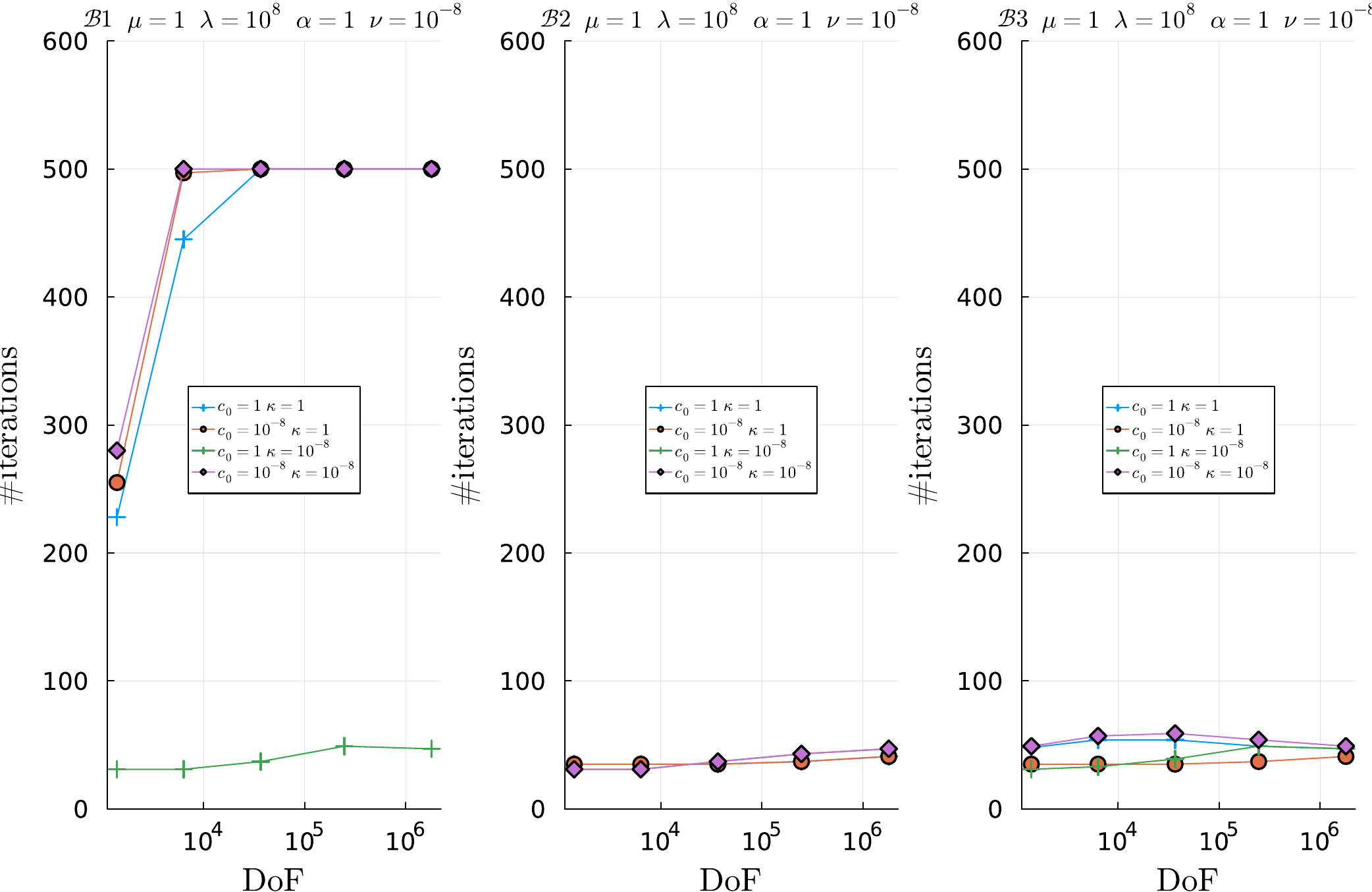}
  \end{center}
  \caption{{Comparison of parameter robustness for preconditioners $\cB_1$, $\cB_2$, and $\cB_3$.}}\label{fig:robustness-b}
  \end{figure}

From {Figures~\ref{fig:robustness-a}-\ref{fig:robustness-b}}, we observe that the number of MINRES iterations with $\cB_3$ reaches an asymptotically constant regime with increasing mesh resolution for all combinations of parameter values tested. This is also the case of $\cB_1$ {and $\cB_2$} in the majority of cases, {except for some combinations of parameter values in which $\nu=10^{-8}$ (see, e.g., $\cB_1$ and $\mu=1$, $\lambda=1$, $\alpha=1$, $\nu=10^{-8}$, $\kappa=1$, and $c_0=\{10^{-8},1\}$)}, where preconditioner efficiency (number of MINRES iterations) significantly degrades (increases) with mesh resolution. {(We recall from Remark~\ref{rem:41} that in the limit $\nu \rightarrow 0$ the system at hand becomes a four-field Biot system~\cite{boon21}.)} For these combinations of parameters, the coupling among the two pressures in the last leading block of $\cB_3$ is essential to retain mesh independence convergence. This observation agrees with the experiments in \cite{boon21} for four-field formulations of Biot poroelasticity and simpler problems (including Herrmann elasticity and reaction-diffusion equation), where comparisons were performed against sub-optimal preconditioners.  
On the other hand, we observe a relatively low sensitivity of the number of MINRES iterations (and thus robustness) with respect to the value of the model of parameters for {all preconditioners} (leaving aside the aforementioned combination of parameter values). For example, 
for $\mu=1$, $\lambda=10^{8}$, $\alpha=1$, $\nu=10^{-8}$, $\cB_3$, and finest mesh resolution, the number of iterations varies between 40 and 50 despite the disparity of scales in the values of $c_0$ and $\kappa$. It is worth noting that 
$\cB_1$ {and $\cB_2$} lead to a similar or even lower number of iterations than $\cB_3$ in most cases, despite it being a computationally cheaper preconditioner. This is the case, {e.g.}, for {and $\cB_1$}, $\mu=1$, $\lambda=1$, $\alpha=1$, $\nu=1$, and all combinations of $\kappa$ and $c_0$ tested.

%%%%%%%%%%%%%%%%%%%%%%%%%%%%%%%%%%%%%%%%%%%%%%%%%%%%%%%%
\section{Concluding remarks}\label{sec:concl}
We have presented a new formulation for the Biot--Brinkman problem using rescaled vorticity and total pressure, and have carried out the stability and solvability analysis of the continuous and discrete problems using parameter-weighted norms. We have derived theoretical error estimates and have subsequently confirmed them numerically; and we have constructed suitable preconditioners that achieve mesh-robustness iteration numbers when varying elastic and porous media flow model parameters. In order to be able to apply these algorithms to more realistic problems one needs to efficiently exploit the vast amount of hardware parallelism available in high-end supercomputers. As future work, we plan to address the parallelisation of the algorithms proposed using the {\tt GridapDistributed.jl} package~\cite{badia22}.

Parts of the theoretical framework advanced herein (in particular, the use of a vorticity-based formulation for the filtration equation) extend naturally to more complex setups, for example to the multiple network generalised Biot--Brinkman model from \cite{hong22}.  Further improvements to the model and to the theory include the rigorous treatment of different types of boundary conditions,  the interfacial coupling with free-flow or with elasticity, and robust a posteriori error estimates.

\bmsection*{Financial disclosure}

None reported.

\bmsection*{Conflict of interest}

The authors declare no potential conflict of interests.

%%%%%%%%%%%%%%%%%%%%%%%%%%%%%%%%%%%%%%%%%%%%%%%%%%%%%%%%
%*****************************************

\end{document}